\newtheorem{thm}{Theorem}[section]
\newtheorem{cor}[thm]{Corollary}
\newtheorem{prop}[thm]{Proposition}
\newtheorem{lem}[thm]{Lemma}
\theoremstyle{definition}
\theoremstyle{remark}
\newtheorem{rem}[thm]{Remark}
\newcommand{\QQ}{\mathbb{Q}}
\newcommand{\HH}{\mathrm{H}}
\newcommand{\CC}{\mathbb{C}} 
\newcommand{\PP}{\mathbb{P}}
\newcommand{\uconf}{\mathrm{UConf}} 
\newcommand{\pconf}{\mathrm{PConf}} 
\newcommand{\OO}{\mathscr{O}}
\newcommand{\LL}{\mathscr{L}}
\title{Stable cohomology of  the universal degree $d$ hypersurface in $\PP^n$}
\author{Ishan Banerjee}
\begin{document}

\begin{abstract}
	Let $U_{d,n}^*$ be the universal degree $d$ hypersurface in $\PP^n$. In this paper we compute the stable (with respect to $d$) cohomology of $U_{d,n}^*$ and give a geometric description of the stable classes. This builds on work of Tommasi \cite{T} and Das \cite{D}.
\end{abstract}
\maketitle
\section{Introduction}
Let $U_{d,n}$ be the \emph{parameter space} of smooth degree $d$ hypersurfaces in $\PP^n$. There is a natural inclusion $U_{d,n} \subseteq \PP^{\binom{n+d}{d}} = \PP (V_{d,n})$, where $V_{d,n}$ is the vector space of homogenous degree $d$ complex polynomials in $n+1$ variables. Let $$U_{d,n}^* := \{(f,p) \in U_{d,n} \times \PP^n | f(p) = 0 \}.$$ Let  $\phi: U_{d,n}^* \to U_{d,n}$  be defined by $\phi (f,p) =f$. The map $\phi: U_{d,n}^* \to U_{d,n}$ is the \emph{universal family} of smooth degree $d$ hypersurfaces in $\PP^n$; it satisfies the following property:
given a family $\pi: E \to B$  of smooth degree $d$ hypersurfaces in $\PP^n$ there is a unique diagram:

$$
\xymatrix{
E\ar[r]^{\exists !} \ar[d] & U_{d,n}^* \ar[d]\\
B \ar[r]^{\exists !} & U_{d,n}\\
}
$$
In other words, any family of smooth degree $d$ hypersurfaces is pulled back from this one. Our main result is as follows:
\begin{thm}\label{m1}
Let $d,n\ge 1$.  Then there is an embedding  of graded algebras:  \vspace{1 pt} $$  \phi:\mathrm{H}^*(\mathrm{PGL}_{n+1}(\CC); \QQ) \otimes \QQ [x]/(x^n) \hookrightarrow H^*(U_{d,n}^*; \QQ)$$
\vspace{1 pt} where $|x|=2$. 
\begin{enumerate}
    \item The element $\phi(x)  = c_1(\LL)$ where $\LL$ is the fiberwise canonical  bundle(defined in Section 2).
    \item Suppose $d\ge 4n+1$. Then $\phi$ is surjective in degree less than $\frac{d-1}{2}$.
\end{enumerate}

\end{thm}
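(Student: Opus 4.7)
The plan is to analyze the Leray spectral sequence of the universal-family map $\phi: U_{d,n}^* \to U_{d,n}$, combining a Leray--Hirsch argument driven by the class $c_1(\LL)$ with Tommasi's stable cohomology theorem for $U_{d,n}$ and a vanishing result for the primitive part of the fiber cohomology.

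First I would construct the algebra map and verify (1). On the first tensor factor, I take the composition of Tommasi's stable embedding $H^*(\mathrm{PGL}_{n+1}(\CC);\QQ)\hookrightarrow H^*(U_{d,n};\QQ)$ with $\phi^*$, and I send $x$ to $c_1(\LL)$. Via the inclusion $U_{d,n}^*\hookrightarrow U_{d,n}\times\PP^n$ followed by the second projection, $\LL$ is pulled back from a tensor power of $\mathcal{O}_{\PP^n}(1)$, so $c_1(\LL)^n$ pulls back from $c_1(\mathcal{O}_{\PP^n}(1))^n\in H^{2n}(\PP^n;\QQ)=0$, forcing $x^n=0$ and making the map well defined. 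For injectivity, restricting $c_1(\LL)$ to any fiber $X_f$ yields a nonzero multiple of the hyperplane class, so the powers $1,c_1(\LL),\ldots,c_1(\LL)^{n-1}$ restrict to a basis of the Lefschetz subalgebra of $H^*(X_f;\QQ)$. Leray--Hirsch applied to the Lefschetz sub-local-system then gives an injection $H^*(U_{d,n})\otimes\QQ[x]/(x^n)\hookrightarrow H^*(U_{d,n}^*)$, which composed with Tommasi's injection yields the claimed embedding.

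For (2), I consider the Leray spectral sequence
\[ E_2^{p,q}=H^p(U_{d,n};R^q\phi_*\QQ)\Rightarrow H^{p+q}(U_{d,n}^*;\QQ). \]
Stalkwise, $R^q\phi_*\QQ$ decomposes into a Lefschetz piece (constant of rank one in even degrees $0\le q\le 2(n-1)$) and a primitive piece concentrated in $q=n-1$. The global lifts $c_1(\LL)^i$ force the Lefschetz sub-spectral-sequence to degenerate at $E_2$ and split off as a summand of the abutment contributing $H^*(U_{d,n})\otimes\QQ[x]/(x^n)$, which by Tommasi is $H^*(\mathrm{PGL}_{n+1}(\CC);\QQ)\otimes\QQ[x]/(x^n)$ in degrees $<(d-1)/2$ under the assumption $d\ge 4n+1$. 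Surjectivity in this range thus reduces to the vanishing
\[ H^p(U_{d,n};R^{n-1}\phi_*\QQ^{\mathrm{prim}})=0 \quad\text{for } p+(n-1)<\tfrac{d-1}{2}. \]

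The main obstacle will be proving this twisted vanishing. Stalkwise the monodromy representation $\pi_1(U_{d,n})\to\mathrm{Aut}(H^{n-1}_{\mathrm{prim}}(X_f;\QQ))$ is irreducible with Zariski-dense image in the appropriate classical group (big-monodromy theorems of Beauville and Deligne), so the local system has no $\pi_1$-invariants. Extending the vanishing to higher $H^p$ across a growing range of degrees is more delicate; I would adapt the complement-of-discriminant (Vassiliev) spectral sequence from Tommasi \cite{T} to local-system coefficients, along the lines of Das \cite{D}, stratifying the discriminant by the configuration of singularities of the hypersurface and estimating the twisted Borel--Moore cohomology of each stratum via the irreducibility of the monodromy on nearby vanishing cycles. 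The numerical hypothesis $d\ge 4n+1$ is precisely what makes these stratum-by-stratum estimates cover the full range $<(d-1)/2$.
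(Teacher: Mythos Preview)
Your strategy for part~(2) is precisely the approach the paper names and then avoids. You run the Leray spectral sequence for $\phi:U_{d,n}^*\to U_{d,n}$ and reduce surjectivity to the twisted vanishing $H^p(U_{d,n};R^{n-1}\phi_*\QQ^{\mathrm{prim}})=0$ in the stable range, but your justification for that vanishing is only a hope (``adapt Vassiliev to local coefficients, along the lines of Das''); big monodromy handles $p=0$ and says nothing about higher $p$, and Das's paper treats only the single case $(d,n)=(3,3)$. This is the genuine gap in your proposal. In the paper the twisted vanishing is Corollary~\ref{twisted}, \emph{deduced from} Theorem~\ref{m1} after the fact rather than used as input to prove it, so you have the logical dependence reversed.

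The paper's route is to fiber the other way. Instead of $U_{d,n}^*\to U_{d,n}$ with fiber a hypersurface, it uses the projection $U_{d,n}^*\to\PP^n$, $(f,p)\mapsto p$, whose fiber $U_d^p=\{f\in U_{d,n}:f(p)=0\}$ is the space of smooth hypersurfaces through a fixed point. Since $\PP^n$ is simply connected, no twisted coefficients arise. The Vassiliev--Tommasi method is applied to these fibers (Sections~3--5), yielding $H^*(U_d^p;\QQ)$ up to a two-way ambiguity; Section~6 resolves the ambiguity by comparing the Serre spectral sequences of $U_d^*\to\PP^n$, of the orbit fibration $\mathrm{PGL}_{n+1}(\CC)\to\PP^n$, and of the trivial bundle $U_d\times\PP^n\to\PP^n$, playing these off against the lower bound of Proposition~\ref{atleast}. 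One smaller point on part~(1): your claim that $\LL$ is literally pulled back from a power of $\OO_{\PP^n}(1)$ is not correct---the relative dualizing sheaf on $U_{d,n}^*$ is the restriction of $\OO_{U_{d,n}}(1)\boxtimes\OO_{\PP^n}(d-n-1)$, so $c_1(\LL)$ has a component from the base; the paper's Proposition~\ref{atleast} argues injectivity by restricting to a single fiber instead.
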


We now define some spaces related to $U_{d,n}$, where we will prove similar results.  Let $X_{d,n} \subseteq V_{d,n}$ be the open subspace of polynomials defining a nonsingular hypersurface. The complement of $X_{d,n}$ in $V_{d,n}$ is known as the \emph{discriminant hypersurface}; it is the zero locus of the classical discriminant polynomial. It is known to be highly singular.

A point of $X_{d,n}$ determines a projective hypersurface up to a scalar. There is a natural action of $\CC^*$ on $X_{d,n}$. Let $U_{d,n} = X_{d,n}/ \CC^*$ be the quotient of this action. We can further quotient to obtain $M_{d,n} := U_{d,n}/ PGL_{n+1}\CC $, the \emph{moduli space} of degree $d$ smooth hypersurfaces in $\PP^n$. Let $$X_{d,n}^* := \{ (f,p) | f \in X_{d,n}, p\in \PP^n , f(p) =0\}.$$

There is a forgetful map $\pi: X_{d,n}^* \to X_{d,n}$  defined by $\pi(f,p) =f$. The fibres of $\pi$ are $$Z(f) := \pi^{-1}(f) = \{p \in \PP^n| f(p) =0\}\subseteq \PP^n.$$ It is well known that the  map $\pi$ is a fibre bundle.

$X_{d,n}^*$ also has several interesting quotients. The action of $GL_{n+1}$ on $X_{d,n}$ lifts to one on $X_{d,n}^*$. We obtain $U_{d,n}^* = X_{d,n}^*/\CC^* $. The map $\pi: X_{d,n}^* \to X_{d,n}$ is $\CC^*$-equivariant and descends to the map $\phi: U_{d,n}^* \to U_{d,n}$. We also have $M_{d,n}^* = X_{d,n}^* / GL_{n+1}(\CC)$.

We can rewrite our result in terms of $X_{d,n}^*$ and $M_{d,n}^*$ as well. This is important to us as our proof will mostly involve understanding the space $X_{d,n}^*$. The space  $M_{d,n}^*$ is important conceptually.

\begin{thm}\label{main}
Let $d,n\ge 1$.
\begin{enumerate}
    \item There is an embedding  of graded algebras:   $$  \psi:(\mathrm{H}^*(\mathrm{GL}_{n+1}(\CC); \QQ) \otimes \QQ [x]/(x^n)) \hookrightarrow \mathrm{H}^*(X_{d,n}^*; \QQ)$$ where $|x|=2$. 

    \item There is an embedding  of graded algebras:   $$  \varphi: \QQ [x]/(x^n) \hookrightarrow \mathrm{H}^*(M_{d,n}^*; \QQ)$$ where $|x|=2$. 
\end{enumerate}
Suppose that $d \ge 4n+1$. Then, the maps $\psi$ and $\varphi$ are surjective  in degree $\le \frac{d-1}{2}$.

\end{thm}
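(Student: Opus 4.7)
The plan is to deduce Theorem~\ref{main} from Theorem~\ref{m1} by exploiting two natural quotient maps: $X_{d,n}^* \to U_{d,n}^*$ is a principal $\CC^*$-bundle arising from $U_{d,n}^* = X_{d,n}^*/\CC^*$; and $U_{d,n}^* \to M_{d,n}^*$ is, up to finite stabilizers, a principal $\mathrm{PGL}_{n+1}(\CC)$-bundle (the action has generically trivial stabilizer by Matsumura--Monsky).

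For part (1), the scaling $\lambda\cdot(f,p) = (\lambda f, p)$ exhibits $X_{d,n}^* \to U_{d,n}^*$ as the restriction of the tautological $\CC^*$-bundle over $\PP(V_{d,n})$, pulled back along the forgetful map $U_{d,n}^* \to U_{d,n} \hookrightarrow \PP(V_{d,n})$. Its Euler class $e$ therefore lies in the image of $H^2(U_{d,n};\QQ) \to H^2(U_{d,n}^*;\QQ)$. By Tommasi's stability $H^*(U_{d,n};\QQ) \cong H^*(\mathrm{PGL}_{n+1}(\CC);\QQ)$ in the stable range, and since $H^2(\mathrm{PGL}_{n+1}(\CC);\QQ) = 0$, the class $e$ vanishes stably. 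The Gysin sequence then splits, giving
\[H^*(X_{d,n}^*;\QQ) \cong H^*(U_{d,n}^*;\QQ) \otimes \Lambda(\beta), \quad |\beta|=1,\]
in the stable range. Combining this with Theorem~\ref{m1} and the rational splitting $H^*(\mathrm{GL}_{n+1}(\CC);\QQ) \cong H^*(\mathrm{PGL}_{n+1}(\CC);\QQ) \otimes \Lambda(\beta)$ (itself from the same Gysin argument applied to the $\CC^*$-bundle $\mathrm{GL}_{n+1}(\CC) \to \mathrm{PGL}_{n+1}(\CC)$) yields the stable surjectivity of $\psi$. The map $\psi$ is defined in general by composing the $\phi$ of Theorem~\ref{m1} with the pullback $H^*(U_{d,n}^*;\QQ) \to H^*(X_{d,n}^*;\QQ)$ on the $H^*(\mathrm{PGL}_{n+1})\otimes\QQ[x]/(x^n)$ factor, and by a Gysin lift on $\beta$; injectivity follows from the exactness of the Gysin sequence and the injectivity of $\phi$.

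For part (2), the fiberwise canonical line bundle $\LL$ is $\mathrm{PGL}_{n+1}(\CC)$-equivariant, so it descends to a line bundle $\LL_M$ on $M_{d,n}^*$, defining $x := c_1(\LL_M) \in H^2(M_{d,n}^*;\QQ)$ and the map $\varphi$. Injectivity is immediate: pulling back via $U_{d,n}^* \to M_{d,n}^*$ sends $\varphi(x^k)$ to $\phi(x^k) \in H^*(U_{d,n}^*;\QQ)$, which is nonzero for $k<n$ by Theorem~\ref{m1}. For stable surjectivity, I would use the Leray spectral sequence of the forgetful map $M_{d,n}^* \to M_{d,n}$, whose generic fiber is the smooth hypersurface $Z(f) \subset \PP^n$. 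Tommasi's theorem gives $H^p(M_{d,n};\QQ) = 0$ for $p > 0$ in the stable range, so the $E_2$-page collapses onto its $p=0$ column, yielding
\[H^*(M_{d,n}^*;\QQ) \cong H^0(M_{d,n};\, H^*(Z(f);\QQ))\]
stably. The local system $H^*(Z(f);\QQ)$ decomposes as the Lefschetz part $\QQ[h]/(h^n)$ (with trivial monodromy) plus the primitive part $H^{n-1}_{\mathrm{prim}}(Z(f);\QQ)$, and by the classical big-monodromy result of Deligne the latter has no $\pi_1(M_{d,n})$-invariants for $d \ge 3$. Identifying the hyperplane class $h$ with $x$ yields $H^*(M_{d,n}^*;\QQ) \cong \QQ[x]/(x^n)$ in the stable range.

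The main obstacle is the monodromy input in part (2): one must verify that invariants (and, since higher Leray terms could also appear in principle, also higher local-coefficient cohomology groups) of the primitive cohomology vanish throughout the range $\le (d-1)/2$ under the hypothesis $d \ge 4n+1$, and reconcile this with Tommasi's stability range for $M_{d,n}$. An alternative route avoiding the monodromy analysis is the Borel construction $(U_{d,n}^*)_{h\mathrm{PGL}_{n+1}(\CC)} \simeq M_{d,n}^*$, whose Serre spectral sequence would be controlled by naturality under the universal fibration $E\mathrm{PGL}_{n+1} \to B\mathrm{PGL}_{n+1}$ (whose SS converges to $\QQ$); the trade-off is that one then has to track multiplicative transgressions within the stable range instead.
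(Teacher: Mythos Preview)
Your overall strategy---deducing Theorem~\ref{main} from Theorem~\ref{m1}---matches the paper's, which states that the two are equivalent via Theorem~2 of~\cite{PS}, and your treatment of part~(1) via the Gysin sequence of the $\CC^*$-bundle $X_{d,n}^* \to U_{d,n}^*$ is essentially the content of Proposition~\ref{tens}.

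Your first approach to part~(2), however, has a real gap. You write that since Tommasi gives $H^p(M_{d,n};\QQ)=0$ for $p>0$ stably, the Leray spectral sequence of $M_{d,n}^* \to M_{d,n}$ ``collapses onto its $p=0$ column.'' This does not follow: Tommasi's vanishing is for \emph{constant} coefficients, whereas the relevant $E_2$-terms $H^p(M_{d,n}; H^{n-1}(Z(f);\QQ))$ involve the nontrivial local system of primitive cohomology, about which vanishing with constant coefficients says nothing. Deligne's big-monodromy result controls only the $p=0$ invariants; it gives no control over $H^p$ with twisted coefficients for $p>0$. You do flag this as an ``obstacle,'' but it is not a technicality to be checked later---it is precisely the difficulty the paper is organized around. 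The introduction explains that the Serre/Leray approach with fiber $Z(f)$ is what is being \emph{avoided} via the Vassiliev--Das computation of $X_d^p$, and the vanishing of the twisted groups (Corollary~\ref{twisted}) is deduced \emph{a posteriori} from Theorem~\ref{main}, not used as input.

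Your alternative via the Borel construction is exactly the route the paper takes: the Peters--Steenbrink theorem~\cite{PS} gives $H^*(X_{d,n}^*;\QQ) \cong H^*(\mathrm{GL}_{n+1}(\CC);\QQ) \otimes H^*(M_{d,n}^*;\QQ)$ as algebras, from which the stable surjectivity of $\varphi$ follows immediately from that of $\psi$ (equivalently of $\phi$). This is the final step in the paper's proof in Section~6.
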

Theorem \ref{main} is equivalent to Theorem \ref{m1} after applying Theorem 2 of \cite{PS}.

\smallskip \noindent\textbf{Nature of stable cohomology:}
Throughout the course of the proof of Theorem \ref{main} we also obtain the following description of the stable cohomology classes of $X_{d,n}^*$-  the stable classes are tautological in the following sense:
There is a line bundle $\LL$ on $M_{d,n}^*$ defined by taking the canonical bundle fibrewise (we rigorously define $\LL$ in Section 2). We will show that $c_1(\LL), \dots, c_1(\LL)^{n-1}$ are nonzero in $H^*(M_{d,n}^*;\QQ)$ and that stably the entire cohomology ring of $M_{d,n}^*$ is just the algebra generated by $c_1(\LL)$. By \cite{PS}, $$\HH^*(X_{d,n}^* ; \QQ) \cong \HH^*(GL_{n+1}(\CC); \QQ) \otimes \HH^*(M_{d,n}^*; \QQ).$$ In this way we have some qualitative understanding of the stable cohomology of $X_{d,n}^*$.

Both the statement of Theorem \ref{main} and our proof of it are heavily influenced by \cite{T}, in which Tommasi proves analogous theorems for $X_{d,n}$. Our techniques and approach are  also similar to that of Das in \cite{D},where he proves $$\HH^*(X_{3,3}^*; \QQ) \cong \HH^*(GL_3(\CC);\QQ) \otimes \QQ[x]/x^3$$ with $|x| =2$.

In some sense, this paper shows that in a stable range, something similar to Das's theorem is true for marked hypersurfaces in general.

\smallskip \noindent\textbf{Method of Proof.}
One could attempt to prove Theorem \ref{main} by applying the Serre spectral sequence to the fibration $\pi: X_{d,n}^* \to X_{d,n}$. To successfully do this however, one would need to understand the groups $\HH^p(X_{d,n}; \HH^q(Z(f); \QQ))$. While we do a priori understand what the groups $\HH^p(X_{d,n};\QQ)$ are (This is the main theorem of \cite{T}), this is not sufficient for us to understand what the groups $\HH^p(X_{d,n}; \HH^q(Z(f); \QQ))$ are,  since $\HH^q(Z(f);\QQ)$ is a \emph{nontrivial} local coefficient system.
Instead we use an idea of Das and compute $\HH^*(X_{d,n}^p;\QQ)$, where $X_{d,n}^p := \{f \in X_{d,n}| f(p) =0\}$ to avoid any computations with nontrivial coefficient systems. After we have proved Theorem \ref{main} we can use it to deduce what these twisted cohomology groups are.
\begin{cor} \label{twisted}
Let $d,n >0$. Suppose$d \ge 4n+1$ and $k < \frac{d-1}{2}$.Then
\begin{equation*}
    \HH^k(X_d; \HH^{n-1}(Z(f); \QQ)) =
    \begin{cases}
      \HH^k(X_d; \QQ) & \textrm{if } n \textrm{ is odd}\\
      0        & \textrm{if } n \textrm{ is even}
    \end{cases}
  \end{equation*}
\end{cor}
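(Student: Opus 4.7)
The plan is to apply the Leray spectral sequence of the smooth fibration $\pi: X_{d,n}^* \to X_{d,n}$,
\[ E_2^{p,q} = \HH^p(X_{d,n}; R^q\pi_*\QQ) \Longrightarrow \HH^{p+q}(X_{d,n}^*; \QQ), \]
and to deduce the corollary by comparing the $E_2$ page with the $E_\infty$ page determined by Theorem \ref{main}. First I would describe $R^q\pi_*\QQ$ using the Lefschetz hyperplane theorem applied fibrewise: for $q \ne n-1$ the sheaf $R^q\pi_*\QQ$ is the constant sheaf $\QQ$ when $q \in \{0,2,\ldots,2(n-1)\}\setminus\{n-1\}$ and zero otherwise. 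For $q = n-1$ the Lefschetz decomposition
\[ \HH^{n-1}(Z(f); \QQ) = \HH^{n-1}(\PP^n; \QQ) \oplus \HH^{n-1}_{\mathrm{prim}}(Z(f); \QQ) \]
splits $R^{n-1}\pi_*\QQ$ as a direct sum of local systems, since the hyperplane class on $\PP^n$ is globally defined and therefore monodromy-invariant. The first summand is $\QQ$ when $n$ is odd and vanishes when $n$ is even.

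Next I would use $c_1(\LL)$ to show that the constant summands of $E_2$ already realize all of $\HH^*(X_{d,n}^*;\QQ)$ in the stable range. By the adjunction formula $c_1(\LL)$ restricts to $(d-n-1) H|_{Z(f)}$ on each fibre, so for $a \in \HH^p(X_{d,n};\QQ)$ and $0 \le i \le n-1$ the class $\pi^*(a) \cdot c_1(\LL)^i \in \HH^{p+2i}(X_{d,n}^*;\QQ)$ has nonzero image $(d-n-1)^i (a \otimes H^i)$ in the constant summand of $E_\infty^{p,2i}$, by multiplicativity of the Leray spectral sequence. Theorem \ref{main} says that these classes span $\HH^*(X_{d,n}^*;\QQ)$ in the stable range, and a dimension count then forces the constant summand of each $E_\infty^{p,2i}$ to equal the constant summand of $E_2^{p,2i}$. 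Equivalently, no differential has a nonzero component on a constant class.

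The primitive summand lives only in the single row $q = n-1$, so every differential into or out of it has its other endpoint in a row $\ne n-1$, necessarily in a constant summand. By the previous step, all such differentials vanish, and so the primitive summand survives unchanged to $E_\infty$. Comparing total dimensions with Theorem \ref{main} then forces $\HH^p(X_{d,n}; \HH^{n-1}_{\mathrm{prim}}(Z(f);\QQ)) = 0$ in the stable range. Substituting back into the Lefschetz decomposition yields the two cases of the corollary: for $n$ odd the constant $\QQ$ summand contributes $\HH^k(X_{d,n};\QQ)$, and for $n$ even there is no constant summand in degree $n-1$ so the answer is $0$.

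The main obstacle will be the careful identification of the image of $\pi^*(a) \cdot c_1(\LL)^i$ in the constant summand of $E_\infty^{p,2i}$; this amounts to tracking the Leray filtration through the cup product and invoking the explicit fibrewise restriction of $c_1(\LL)$. A secondary bookkeeping issue is matching the stable ranges of Theorem \ref{main} and of Tommasi's computation of $\HH^*(X_{d,n};\QQ)$ so as to recover the hypothesis $k < \frac{d-1}{2}$ stated in the corollary.
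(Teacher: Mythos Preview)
Your proposal is correct and follows essentially the same route as the paper: run the Serre/Leray spectral sequence for $\pi: X_{d,n}^* \to X_{d,n}$, use Theorem~\ref{main} to see that the constant (hyperplane-class) summands in rows $q=0,2,\dots,2(n-1)$ already account for all of $\HH^*(X_{d,n}^*;\QQ)$ in the stable range, and then observe that the remaining (primitive) piece in the single row $q=n-1$ cannot participate in any differential and hence must vanish on $E_2$. The paper phrases this last step as ``all other terms are killed'' yet ``cannot be source or target of a nonzero differential,'' forcing them to be zero; your Lefschetz-decomposition bookkeeping and uniform treatment of the two parities make the same argument a bit more explicit, but there is no genuine difference in strategy.
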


\smallskip \noindent\textbf{Acknowledgements}: I'd like to thank my advisor Benson Farb for his endless patience and encouragement. I'd like to thank Eduard Looijenga for  help  with Lemma \ref{v}. I'd like to thank Nir Gadish and Ronno Das for some comments on the paper. I'd like to thank Burt Totaro for catching an error in a previous version of the paper. Finally I'd like to thank Gal Porat for his help in editing this paper.

\section{A lower bound on $\HH^k(X_{d,n}^*)$}
We begin by noting that there is an embedding of algebras $\HH^k(GL_{n+1}(\CC)) \otimes \QQ [x]/(x^n) \hookrightarrow \HH^k(X_d^*)$ in the stable range. More precisely, we have the following:

\begin{prop}\label{atleast}
Let $ n \ge 0$, and let $d > n+1$. Then there is a natural embedding.
$$i:H^*(GL_{n+1}(\CC); \QQ) \otimes \QQ [x]/(x^n) \hookrightarrow H^*(X_{d,n}^*; \QQ)$$ where $|x| =2$. The inclusion is one of algebras.
\end{prop}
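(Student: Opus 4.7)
The plan is to exhibit the two tensor factors in the embedding separately and then combine them via the Peters--Steenbrink splitting. The $H^*(GL_{n+1}(\CC); \QQ)$ factor is produced by the $GL_{n+1}(\CC)$-action on $X_{d,n}^*$: since the hypothesis $d > n+1$ forces smooth degree-$d$ hypersurfaces in $\PP^n$ to have finite automorphism groups, the action has finite stabilizers, and Theorem 2 of \cite{PS} delivers a graded-algebra isomorphism $H^*(X_{d,n}^*; \QQ) \cong H^*(GL_{n+1}(\CC); \QQ) \otimes H^*(M_{d,n}^*; \QQ)$. The $\QQ[x]/(x^n)$ factor will come from the class $\tilde{x} := q^* h \in H^2(X_{d,n}^*; \QQ)$, where $q : X_{d,n}^* \to \PP^n$ is the projection $(f,p) \mapsto p$ and $h$ is the hyperplane class.

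To realize $\QQ[\tilde{x}]/(\tilde{x}^n)$ as a subalgebra, I would verify (a) $\tilde{x}^k \ne 0$ for $k \le n-1$ and (b) $\tilde{x}^n = 0$. For (a), restrict along $p : X_{d,n}^* \to X_{d,n}$ to a fiber $Z(f)$; the restriction of $\tilde{x}$ is the hyperplane class of the smooth projective $(n-1)$-fold $Z(f)$, whose powers through $h^{n-1}$ are nonzero by the degree of the hypersurface. For (b), the key input is a Chern-class identity for the relative tangent bundle $T := T_{X_{d,n}^*/X_{d,n}}$, which has rank $n-1$. One has a short exact sequence
$$0 \to T \to q^* T_{\PP^n} \to q^* \OO_{\PP^n}(d) \to 0$$
on $X_{d,n}^*$, where the second map is $v \mapsto df(v)$ (surjective because $f$ defines a smooth hypersurface). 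Whitney's formula then gives $c(T)(1 + d\tilde{x}) = (1+\tilde{x})^{n+1}$ in $H^*(X_{d,n}^*; \QQ)$. Inductively along degree, $c_k(T) = A_k(d)\,\tilde{x}^k$ for explicit rationals $A_k(d)$ determined by the recursion $A_k = \binom{n+1}{k} - d A_{k-1}$, $A_0 = 1$. Matching the degree-$2n$ piece while using $c_n(T) = 0$ yields the scalar relation $\lambda_n(d)\cdot\tilde{x}^n = 0$ with $\lambda_n(d) = ((1-d)^{n+1} - 1)/d$.

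Finally, $\tilde{x}$ is $GL_{n+1}(\CC)$-invariant (it is pulled back from $\PP^n$, where the path-connected group $GL_{n+1}(\CC)$ acts trivially on rational cohomology), so under the PS decomposition the subalgebra $\QQ[\tilde{x}]/(\tilde{x}^n)$ sits inside the $H^*(M_{d,n}^*; \QQ)$ factor. Tensoring with $H^*(GL_{n+1}(\CC); \QQ)$ then gives the sought embedding of algebras. I expect the main technical step to be the Chern-class computation and the numerical check that $\lambda_n(d) \ne 0$: this scalar vanishes precisely when $(1-d)^{n+1} = 1$, i.e.\ when $d = 2$ and $n$ is odd, and the hypothesis $d > n+1 \ge 2$ (hence $d \ge 3$) is exactly what is needed to rule out that degenerate case where the rank constraint becomes vacuous and one would need a different argument to kill $\tilde{x}^n$.
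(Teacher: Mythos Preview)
Your approach is essentially the same as the paper's: both use the Peters--Steenbrink decomposition $H^*(X_{d,n}^*;\QQ)\cong H^*(GL_{n+1}(\CC);\QQ)\otimes H^*(M_{d,n}^*;\QQ)$ and then exhibit a degree-$2$ class whose powers through $n-1$ are nonzero by restricting to a fibre $Z(f)$. The paper's class is $c_1(\LL)$ for the fibrewise canonical bundle $\LL$; your class is $\tilde x=q^*h$. From your own exact sequence $0\to T\to q^*T_{\PP^n}\to q^*\OO(d)\to 0$ one reads off $c_1(\LL)=-c_1(T)=(d-n-1)\tilde x$, so the two generators differ only by the nonzero scalar $d-n-1$.

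Where your write-up genuinely adds something is the relation $\tilde x^n=0$. The paper's proof checks only that $c_1(\LL)^k|_Z\neq0$ for $k\le n-1$ and then asserts that $H^*(M_{d,n}^*;\QQ)$ ``contains a subalgebra isomorphic to $\QQ[x]/(x^n)$''; it never verifies the top relation, so strictly speaking the algebra embedding is not established there (only the lower bound, which is all that is used in Section~6). Your argument via $c_n(T)=0$ and the recursion $A_k=\binom{n+1}{k}-dA_{k-1}$ does give $A_n\tilde x^n=0$ with $A_n=\bigl(1-(1-d)^{n+1}\bigr)/d\neq0$ for $d\ge3$, and hence fills that gap cleanly.

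There is one soft spot in your reasoning. Saying that $\tilde x$ lies in the $H^*(M_{d,n}^*;\QQ)$ factor ``because it is $GL_{n+1}(\CC)$-invariant'' is not an argument: $GL_{n+1}(\CC)$ is connected, so \emph{every} class in $H^*(X_{d,n}^*;\QQ)$ is invariant, and invariance does not single out $\mathrm{im}\,\pi^*$. What you actually need is that $\tilde x$ lifts to equivariant cohomology $H^*_{GL_{n+1}}(X_{d,n}^*;\QQ)\cong H^*(M_{d,n}^*;\QQ)$. This is immediate once you observe that $\tilde x=c_1\bigl(q^*\OO_{\PP^n}(1)\bigr)$ and $q^*\OO(1)$ is a $GL_{n+1}$-equivariant line bundle; equivalently, use $c_1(\LL)=(d-n-1)\tilde x$ together with the paper's remark that $\LL$ descends to $M_{d,n}^*$. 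With that correction your proof goes through and is in fact more complete than the paper's on this point.
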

 \begin{proof}
 We  first define the \emph{fiberwise canonical bundle} $\LL$ over $X_{d,n}^*$ as follows:
$$ \LL = \{(f,p,v) | (f,p) \in X_d^*, v \in \wedge^{n-1}T^*_p(Z(f))\}.$$
Note that the bundle $\LL$ is actually pulled back from a bundle on $M_{d,n}^*: = X_{d,n}^*/ GL_{n+1}(\CC)$. 
By the same argument as in Theorem 1 of \cite{PS}, $$\HH^*(X_{d,n}^*;\QQ) \cong \HH^*(GL_{n+1}(\CC); \QQ) \otimes \HH^*(M_{d,n}^* (\CC); \QQ).$$ Let $i: \mathrm{GL}_{n+1}(\CC) \to X_{d,n}$ be an orbit map. More precisely, Theorem 1 of \cite{PS} states that  the natural map $$\pi ^*:\HH^*(M_{d,n}^*; \QQ) \to \HH^*(X_{d,n}^*;\QQ)$$ makes $\HH^*(X_{d,n}^*;\QQ)$  a free$\HH^*(M_{d,n}^*; \QQ)-$ module. with a basis given by some set $\{\alpha_i$\} such that the pullbacks $\{i^*(\alpha_i)\}$  give a basis of $H^*(GL_{n+1}(\CC); \QQ)$. But since $H^*(GL_{n+1}(\CC); \QQ)$ is a free graded commutative algebra, this forces $H^*(X_{d,n}^*;\QQ)$ to be isomorphic to $H^*(GL_{n+1}(\CC); \QQ) \otimes H^*(M_{d,n}^* (\CC); \QQ)$ as algebras.

If we restrict $\LL$ to a particular hypersurface $Z$, the bundle $\LL|_Z = \OO_Z(d- n -1)$. The chern class of $\LL|_Z$ satisfies the equality- $$c_1(\OO_Z(d- n - 1))= (d- n-1)c_1(\OO_Z(1))  = d(d- n -1) \omega_Z,$$ where $\omega_Z$ is the Kahler class of the variety $Z$.   This implies that for $d > n+1$, the classes $c_1(\LL)| Z, \dots, c_1^{n-1}(\LL)|Z$ are nonzero since $\omega_Z, \dots \omega_Z^{n-1}$ are nonzero. Now taking $x = c_1(\LL)$, this implies that $\HH^*(M; \QQ)$ contains a subalgebra isomorphic to $\QQ [x]/x^n$.
\end{proof}



 



\section{The space $X_{d}^p$ and the Vassiliev method}
Let us define the ordered and unordered configuration space of a space $X$ as we will need to consider thes in this section. Given a space $X$, the $n$th \emph{ordered configuration space} of $X$ denoted $\pconf_n X$ is $$\pconf_n X : =\{(x_1 \dots, x_n) \in X^n| \forall i \neq j, x+i \neq x_j\}.$$ There is a natural action of the symmetric group on $n$ letters $S_n$ on $X$ by permuting the coordinates. The quotient $\pconf_n X /S_n$ is called the nth \emph{unordered configuration space} and denoted $\uconf_n X$.
In order to understand $X_{d,n}$ we will first look at the cohomology of a related space. For a fixed point $p \in \PP^n$ we set $$X_{d}^p = \{f \in X_d | f(p) =0 \}.$$  Then  $$X_d^p \subseteq V_{d}^p = \{f \in V_d| f(p) =0 \}.$$ The space $V_{d}^p$  is a vector space. The complement of  $X_{d}^p$ in $V_{d}^p$ will be called $\Sigma_{d,p}$. We will compute its Borel-Moore homology and use Alexander duality to compute $H^*(X_d^p)$.

In what follows we will often refer to a group $G_p$ defined as follows: if $p \in \PP^n$, it is by definition a one-dimensional  subspace $p \subseteq \CC^{n+1}$. Choose a complementary subspace  $W \subseteq \CC^{n+1}$ (it is not unique, but we will fix a particular one). We let $G_p = GL(W)$.

Let $x_1, \dots x_n$ be  local coordinates in a neighbourhood $U$ containing $p$. Pick a local trivialisation $s$ of the line bundle $\OO(d)$ in $U$.  There is an induced map $$f^*:  T_0^*(\OO(d)_p)  \to T^*_p(\PP^n) .$$ Let us use our local coordinates to identify $T_0^*(\OO(d)_p) $ with $\CC$ and $T^*_p(\PP^n)$ with $\CC^n$.

Suppose $f \in X_d^p$. Then the map $f^*$ is nonzero because $f$ has a regular zero locus. Let  This defines a map 
$$\pi: X_d^p \to T^*_p(\PP^n) - \{0\} \cong\CC^n- 0$$ defined by $\pi(f) = f^*(1).$
\begin{prop}
The map $\pi: X_d^p \to \CC^n- 0$ is a fibration.
\end{prop}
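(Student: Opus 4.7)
The plan is to exhibit $\pi$ as an equivariant map to a homogeneous space of $G_p$, so that local sections of the corresponding quotient map produce local trivializations of $\pi$.

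First I would set up the $G_p = GL(W)$-action on both source and target. Extend $G_p$ to act linearly on $\CC^{n+1} = p \oplus W$ by the identity on the line $p$ and by the standard action on $W$. This induces an action on $V_d$ by $g \cdot f := f \circ g^{-1}$, which preserves the condition $f(p)=0$ (since $p$ is fixed) and preserves smoothness of the zero locus, hence restricts to an action on $X_d^p$. Dually, because $p$ is a fixed point of $G_p$, differentiation at $p$ gives a linear action on $T_p(\PP^n) \cong W$, and hence on $T^*_p(\PP^n) \setminus \{0\} \cong \CC^n - 0$. This is the standard $GL_n(\CC)$-action, which is transitive on $\CC^n - 0$.

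Next I would verify that $\pi$ is $G_p$-equivariant. By the chain rule $d(f \circ g^{-1})_p = df_p \circ (dg^{-1})_p$; transporting through the trivialization $s$ of $\OO(d)$ at $p$ used to define $\pi$, this gives $\pi(g \cdot f) = g \cdot \pi(f)$. Combined with transitivity on the base, the conclusion follows by a standard argument: pick a basepoint $v_0 \in \CC^n - 0$ with stabilizer $H \subseteq G_p$; then the quotient map $G_p \to G_p/H \cong \CC^n - 0$ is locally trivial, so over each sufficiently small open $U \subseteq \CC^n - 0$ there is a section $\sigma: U \to G_p$. The formula $(u, f) \mapsto \sigma(u) \cdot f$ then defines a homeomorphism $U \times \pi^{-1}(v_0) \xrightarrow{\sim} \pi^{-1}(U)$ trivializing $\pi$ over $U$, exhibiting $\pi$ as a locally trivial fiber bundle and a fortiori a fibration.

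The main obstacle I anticipate is bookkeeping around the trivialization $s$ of $\OO(d)$ near $p$: the identification $T_0^*(\OO(d)_p) \cong \CC$ used to define $\pi(f) = f^*(1)$ is not canonically $G_p$-invariant, so equivariance could hold only up to a nonzero scalar. This is cleanly sidestepped by first working with the intrinsic target $T^*_p(\PP^n) \setminus \{0\}$, on which $G_p$ acts canonically, and identifying it with $\CC^n - 0$ only after the fibration structure has been established; any scalar ambiguity is then absorbed into the identification and does not affect local triviality.
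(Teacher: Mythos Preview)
Your proposal is correct and follows essentially the same strategy as the paper: both use that $\pi$ is $G_p$-equivariant and that $G_p$ acts transitively on $\CC^n-0$, so the bundle is determined by a single fiber. The paper phrases this as ``$\pi$ is pulled back from a map over the one-point space $(\CC^n-0)/G_p$,'' while you spell out the same content more explicitly by building local trivializations from local sections of $G_p \to G_p/H \cong \CC^n-0$; these are two packagings of the same argument.
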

\begin{proof}
The group $G_p$ acts  on $\PP^n$ fixing $p$. Therefore it acts on both $X_{d}^{p}$ and $\CC^{n} - \{0\}$. The the map $\pi$ is equivariant with respect to these actions. The map $\pi$ therefore is the pullback of a map from $\pi': X_{d}^{p} / G_p $ to $\CC^{n} - \{0\}/ G_p$. But $\CC^{n}- \{0\} / G_p$ is a point and since $\pi'$ is surjective it is a fibration. Since pullbacks of fibrations are fibrations, $\pi$ is a fibration.
\end{proof}
Let $X_v:=\pi^{-1}(v)$ and let $$V_v := \{f \in V_d | f^*(1) =v \}.$$
Clearly, $X_v \subseteq V_v$. Let $\Sigma_v:=V_v - X_v$. We will try to understand the Borel-Moore homology of $\Sigma_v$. 

To accomplish this the Vassiliev method\cite{V} will be applied.  The Vassiliev method to compute Borel-Moore homology  involves stratifying a space and using the  associated spectral sequence to compute its Borel Moore homology. The space $\Sigma_v$  will be stratified based on the points at which  a section $f$ is singular. The techniques used are very similar to that in  \cite{T} which contains many of the technical details.

We will now construct a cubical space $X$ which will be involved in understanding $\Sigma_v$. Let $N = \frac{d-1}{2}$. 
Let $I$ be a subset of $\{1, \dots, N-1\}.$ For $k< N$, let  $$X_I :=\{(f, p)| f \in \Sigma_v, p: I \to \PP^n, p(I) \subseteq \textrm{ Singular zeroes of  }f \}.$$
We define $$X_{I \cup \{N\}}:= \{(f, p)| f \in \Sigma_v, p: I \to \PP^n, p(I) \subseteq \textrm{ Singular zeroes of  }f , f\in \bar \Sigma^{ge N}\}. $$ If $I \subseteq J$ then we have a natural map from $X_J \to X_I$ defined by restricting $p$. This gives $X_{\cdot}$ the structure of a cubical space over the set $\{1, \dots, N\}$. We can take the geometric realization of $X_{\cdot}$ denoted by $|X|$. Then there is  a map $\rho: |X| \to \Sigma_v$, induced by the forgetful maps $X_I \to \Sigma_v$.

$|X|$ is topologised in a non-standard way so as to make $\rho$ proper. We topologise it as follows:
 in \cite{T}, a space $|\mathscr{X}|$ is constructed with a  map $\rho : |\mathscr{X}| \to \Sigma$. Here, $\Sigma = V_d - X_d$.   The topology on $|\mathscr{X}|$ is chosen carefully so as to make $\rho$ proper. The construction of $|\mathscr{X}|$ as a set identical to that of $|X|$ except we replace $\Sigma_v$ with $\Sigma$.   There is a natural inclusion $|X| \to |\mathscr{X}|$. We give $|X|$ the subspace topology along this map.

\begin{prop}
The map $\rho: |X| \to \Sigma_v$ is a proper homotopy equivalence.
\end{prop}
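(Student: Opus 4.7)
The plan is to follow the standard template for cubical/simplicial resolutions in the Vassiliev method, adapted to our truncated setup in the spirit of Tommasi \cite{T}. Two things must be verified: that $\rho$ is proper, and that it is a homotopy equivalence. Properness is essentially built into the construction: $|X|$ sits inside $|\mathscr{X}|$ with the subspace topology, the map $|\mathscr{X}| \to \Sigma$ is proper by Tommasi's carefully chosen topology, and $\Sigma_v$ is closed in $\Sigma$. Hence the restriction $\rho: |X| \to \Sigma_v$ is automatically proper.

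The substantive step is to show $\rho$ is a homotopy equivalence, which I would do by checking that all fibers of $\rho$ are contractible and that $\rho$ admits a fiberwise deformation retract. Given $f \in \Sigma_v$, let $S(f) \subseteq \PP^n$ denote its set of singular zeros. If $|S(f)| < N$, then unwinding the cubical structure of $X_\cdot$ shows that $\rho^{-1}(f)$ is homeomorphic to the simplex on $S(f)$, which is convex and hence contractible. If $|S(f)| \ge N$, the extra $N$-th coordinate — which is activated precisely because $f \in \bar\Sigma^{\ge N}$ — contributes an additional cone direction, and the fiber becomes the join of a simplex on (a subset of) $S(f)$ with a cone point, still contractible. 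Combining properness with contractible fibers, and exhibiting an explicit fiberwise deformation retract exactly as in \cite{T}, one concludes that $\rho$ is a proper homotopy equivalence.

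The main obstacle is the stratum $|S(f)| = N$, where the contribution of the $N$-th coordinate must glue coherently with the contributions from the lower strata without tearing the fiber. This is precisely the reason Tommasi introduces a non-standard topology on $|\mathscr{X}|$, and inheriting the subspace topology from $|\mathscr{X}|$ is what makes the argument run in our restricted setting. In practice the cleanest way to write out the proof is to run Tommasi's original argument for $\Sigma$ essentially verbatim, and observe that every step restricts without modification to the closed subset $\Sigma_v \subseteq \Sigma$, using only that the stratification by singular zero locus is compatible with the restriction from $V_d$ to $V_v$.
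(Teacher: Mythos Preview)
Your proposal is correct and follows essentially the same approach as the paper: properness is inherited from Tommasi's construction via the subspace topology, and the fibres are identified as simplices (when $f \notin \bar\Sigma^{\ge N}$) or cones (when $f \in \bar\Sigma^{\ge N}$), hence contractible. The only minor difference is that the paper invokes the cell-like map results of Lacher \cite{L} to pass from ``proper with contractible fibres'' to ``homotopy equivalence'', whereas you propose to carry over Tommasi's explicit fibrewise deformation retract; either route works and the underlying argument is the same.
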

\begin{proof}
This proof is nearly identical to that of Lemma 15 in \cite{T}.
The properness of $\rho : Y \to \Sigma_v$ follows from the properness of $\rho: |\mathscr{X}| \to \Sigma$ and the properties of the subspace topology. In our setting having contractible fibres implies that the map $\rho$ is a homotopy equivalence, this follows by combining Theorem 1.1 and Theorem 1.2 of \cite{L}.
 We will now prove that the fibres are contractible. If $f \not \in \bar \Sigma_v^{\ge N}$, let $\{p_1, \dots, p_k\}$ be the singular zeroes of $f$. In this case the fibre $\rho^{-1}(f)$  is a simplex with vertices given by the images of the points $(f,x_i) \in X_{\{1\}} \times \Delta_{\{1\}}$.    Now suppose $f \in \bar \Sigma_v^{\ge N}$. In this case the fibre $\rho^{-1}(f)$ is a cone over the point $f \in X_{N} \times \Delta_{\{N\}}$.

\end{proof}

Now as in any geometric realization, $|X|$ is filtered by $$F_n = \mathrm{im} (\coprod_{|I| \le n} X_I\times \Delta_k).$$ The $F_n$ form an increasing filtration of $|X|$ , i.e. $F_1 \subseteq F_2 \dots F_n \subseteq F_{n+1}\subseteq \dots$ and $\cup_{n=1}^{\infty}F_n = |X|$.

\begin{prop}\label{is}
Let $d,n \ge 1$. Let $N = \frac{d-1}{2}$. For $k<N$, the space $F_k- F_{k-1}$ is a $\Delta_k^{\circ}$- bundle, over a vector bundle $B_k$ over $\uconf_k(\PP^n -p)$.
\end{prop}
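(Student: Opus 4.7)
The plan is to construct the bundle projection to $\uconf_k(\PP^n - \{p\})$ explicitly, identify the fibers as products of an affine space and an open simplex, and verify local triviality via an interpolation argument.

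First, I would unwind the cubical structure to describe $F_k - F_{k-1}$ concretely: each point is represented by a triple $(f, \{p_1, \ldots, p_k\}, t)$, where $f \in \Sigma_v$, the $p_i$ are distinct singular zeros of $f$, and $t$ lies in the open simplex $\Delta_k^{\circ}$ on these $k$ vertices. Since $f \in V_v$ with $v \neq 0$ forces $p$ itself to be a regular zero of $f$, the $p_i$ must automatically lie in $\PP^n - \{p\}$. Projection onto the set of vertices defines
$$\theta: F_k - F_{k-1} \longrightarrow \uconf_k(\PP^n - \{p\}), \qquad (f, \{p_i\}, t) \longmapsto \{p_1, \ldots, p_k\}.$$

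Second, I would identify the fiber of $\theta$ over a configuration $\{p_1, \ldots, p_k\}$ with the product $B_k(\{p_i\}) \times \Delta_k^{\circ}$, where
$$B_k(\{p_i\}) := \{f \in V_v : f(p_i) = 0 \text{ and } df|_{p_i} = 0 \text{ for all } i\}.$$
The $\Delta_k^{\circ}$ factor is manifestly locally trivial over $\uconf_k(\PP^n - \{p\})$, since the open simplex is determined combinatorially by the underlying configuration.

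Third, and this is the technical heart of the proposition, I would prove that the affine subspaces $B_k(\{p_i\})$ glue into a vector bundle $B_k \to \uconf_k(\PP^n - \{p\})$. This reduces to showing that the combined evaluation-and-differential map
$$\mathrm{ev}: V_d \longrightarrow \bigoplus_{q \in \{p, p_1, \ldots, p_k\}} \bigl(\OO(d)_q \oplus T^*_q\PP^n \otimes \OO(d)_q\bigr)$$
is surjective for every ordered tuple of distinct points $(p_1, \ldots, p_k)$ in $\PP^n - \{p\}$. By a standard interpolation argument, for each base point $q_0$ among $\{p, p_1, \ldots, p_k\}$ one constructs a polynomial of degree $d$ that vanishes to order $2$ at every other $q$ and realizes any prescribed value and differential at $q_0$, by multiplying a product of squared linear forms $\prod_{q \neq q_0} \ell_q^2$ (each $\ell_q$ vanishing at $q$) by a low-degree polynomial tuned at $q_0$. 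The construction needs $d \geq 2k + 1$, which is guaranteed by $k \leq N - 1 = \frac{d-3}{2}$. Surjectivity of $\mathrm{ev}$ forces the kernels to have constant codimension, and the algebraic dependence of the interpolants on the $q$'s provides local trivializations of $B_k$.

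The main obstacle will be organizing the interpolation to be $S_k$-equivariant so that the vector bundle structure descends from $\pconf_k(\PP^n - \{p\})$ to the unordered configuration space. Once this is handled, combining the vector bundle $B_k$ from Step~3 with the simplex fibration from Step~2 yields the $\Delta_k^{\circ}$-bundle over $B_k$ asserted in the proposition.
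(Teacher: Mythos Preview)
Your proposal is correct and follows essentially the same approach as the paper: factor the projection through the intermediate space $B_k$ of pairs $(f,\{p_1,\dots,p_k\})$, observe that $F_k - F_{k-1} \to B_k$ is an open-simplex bundle, and then argue that $B_k \to \uconf_k(\PP^n - p)$ is a vector (affine) bundle. The only difference is that the paper outsources the last step to Lemma~3.2 of Vakil--Wood \cite{VW}, whereas you spell out the interpolation argument directly; your degree count $d \ge 2k+1$ is exactly what that lemma encodes, and your worry about $S_k$-equivariance is unnecessary since the defining conditions on $B_k(\{p_i\})$ are manifestly symmetric in the $p_i$.
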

\begin{proof}
The space $F_k - F_{k-1}$ consists of the interiors of  $k$ simplices labeled by $\{f, p_0 \dots p_k\}$.
Let $$B_k = \{(f ,\{p_0 \dots p_k\}) \in \Sigma_v \times \uconf_k (\PP^n - p)| p_i \textrm{ are singular zeroes of } f\}.$$

We have a map  $\phi:F_k - F_{k-1} \to B_k$, defined by $$\phi((f ,\{p_0 \dots p_k\}) , s_0, \dots, s_k) = (f ,\{p_0 \dots p_k\}).$$ The map $\phi$ expresses $F_k - F_{k-1}$ as a fibre bundle  over $B_k$ with $\Delta^\circ_k$ fibres, i.e we have a diagram as follows:

   $$ \xymatrix{
\Delta^\circ_k  \ar[r] & F_k - F_{k-1} \ar[d]\\
 & B_k\\
}$$

We have a map $B_k \to \uconf_k (\PP^n - p)$ defined by $\{f,p_0 \dots ,p_k \} \mapsto \{p_0, \dots p_k\}$. This is a vector bundle by Lemma 3.2 in \cite{VW}. 
\end{proof}

We have a one-dimensional local coefficient system denoted $\pm\QQ$ on $\uconf_k(\PP^n - p)$ defined in the following way:
Let $S_k$ be the symmetric group on $k$ letters. We have a homomorphism $\pi_1 \uconf_k(\PP^n - p) \to S_k$ associated to the covering space $\textrm{PConf}_k(\PP^n - p) \to \uconf_k(\PP^n - p)$. Compose this homomorphism with the sign representation $S_k \to \pm 1 = GL_1(\QQ)$ to obtain our local system.  
\begin{prop}\label{cohoeq}
Let $d,n \ge 1$. Let $e_d = \mathrm{dim}_{\CC}(V_v)$. For $k< \frac{d-1}{2}$, $$\bar H_*(F_k - F_{k-1}) \cong H_{*-(k + 2e_d -2(n+1)(k+1))}(\uconf_k(\PP^n -p), \pm \QQ).$$
\end{prop}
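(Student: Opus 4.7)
The strategy is to combine the two-stage bundle description of $F_k - F_{k-1}$ from Proposition~\ref{is}---an open simplex bundle over a complex vector bundle $B_k$ over $\uconf_k(\PP^n - p)$---with two successive Borel--Moore shift isomorphisms, while carefully tracking the sign twist that arises from the symmetric group action on the simplex fibers.

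First, I would pull the two-stage bundle back to the ordered configuration cover $\pconf_k(\PP^n - p) \to \uconf_k(\PP^n - p)$. Over the ordered base, the vertices of each simplex fiber acquire a canonical labeling, so the pulled-back $\Delta_k^\circ$-bundle has canonically oriented fibers; since $\bar H_*(\Delta_k^\circ)$ equals $\QQ$ concentrated in degree $k$, pushing forward contributes a Borel--Moore degree shift of $k$. Next, the pulled-back vector bundle $\tilde B_k \to \pconf_k(\PP^n - p)$ has complex fiber dimension $e_d - (n+1)(k+1)$: in the stable range $k < N = (d-1)/2$, imposing a singular zero at each of the $k+1$ labeled points cuts out $n+1$ independent linear conditions on $V_v$. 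The Thom isomorphism for this complex bundle then shifts Borel--Moore degree by a further $2e_d - 2(n+1)(k+1)$. Composing, the Borel--Moore homology of the pullback of $F_k - F_{k-1}$ to $\pconf_k$ is isomorphic to $\bar H_{*-k-2e_d+2(n+1)(k+1)}(\pconf_k(\PP^n - p);\QQ)$.

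Finally, I would descend from the ordered to the unordered base by taking invariants under the symmetric group acting on the simplex vertices (equivalently, by the rational transfer isomorphism). The Thom class of the complex vector bundle is invariant, but a permutation $\sigma$ of the vertices acts on $\bar H_k(\Delta_k^\circ) \cong \QQ$ by $\mathrm{sgn}(\sigma)$, since reordering the vertices of a simplex reverses orientation according to the sign of the permutation. Taking invariants of this twisted $\QQ$-coefficient system recovers precisely homology with the sign local system $\pm \QQ$ on $\uconf_k(\PP^n - p)$ introduced before the statement, yielding the claimed isomorphism. The only delicate point is this orientation bookkeeping in the descent step---checking that the simplex-orientation character matches exactly the definition of $\pm \QQ$ and that no additional twist is introduced by $\tilde B_k$ (which is automatic as complex bundles are canonically oriented); the rest is a standard application of bundle shift isomorphisms in Borel--Moore homology, following the analogous construction in \cite{T}.
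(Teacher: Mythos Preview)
Your proposal is correct and follows essentially the same route as the paper: both arguments invoke the two-stage bundle structure of Proposition~\ref{is}, apply the Borel--Moore shift for the open-simplex and complex vector-bundle fibers, and then identify the resulting orientation local system with $\pm\QQ$ via the permutation action on the simplex vertices. The only cosmetic difference is that you pass through the ordered cover and descend by transfer, whereas the paper works directly over the unordered base and names the monodromy local system $\QQ(\sigma)$ before identifying it with $\pm\QQ$; these are two phrasings of the same computation.
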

\begin{proof}
By Proposition \ref{is} the space $F_k - F_{k-1}$ is a bundle over $\uconf_k(\PP^n - p)$. This fact implies that $$\bar H_*(F_k - F_{k-1}) \cong H_{*-(k + 2e_d -2(n+1)(k+1))}(\uconf_k(\PP^n -p),  \QQ(\sigma)).$$ Here $\QQ(\sigma)$ is the local sytem obtained by the action of $\pi_1(\uconf_k(\PP^n -p))$ on the fibres $\bar H_k(\Delta_k^{\circ})$ where in this case $\Delta_k^{circ}$ is the open $k$ simplex corresponding to the fibres of the map $F_k - F_{k-1} \to B_k$. But one observes that the action of $\pi_1(\uconf_k(\PP^n -p))$ on this open simplex is by permutation of the vertices which imples that $\QQ(\sigma)= \pm \QQ$. 

\end{proof}

As with any filtered space, we have a spectral sequence with $E_1^{p,q} = \bar H_{p+q}(F_p - F_{p-1};\QQ)$ converging to $\bar H_*(Y;\QQ)$. Now for $p<N$ by Proposition \ref{cohoeq} $$E_1^{p,q} = \bar H_{q-(2e_d-2(n+1)(p+1))}(\uconf_p(\PP^n -p);\pm \QQ).$$ We would like to claim that $E_1^{N,q}$ doesn't matter in the stable range. To be more precise, we have the following:
\begin{lem}
Let $d,n \ge 1$. Let $N = \frac{d-1}{2}$. Let $k > 2e_d -N$.
Then, $\bar H_k(Y - F_N;\QQ) \cong \bar H_k(Y; \QQ) $ .
\end{lem}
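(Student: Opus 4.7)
The plan is to apply the standard long exact sequence in Borel--Moore homology to the closed inclusion $F_{N-1} \subseteq Y = F_N$, whose open complement is the top stratum $F_N - F_{N-1}$; I read the lemma's ``$Y - F_N$'' as referring to what remains after deleting the contribution of the top cubical piece, so that the intended content is equivalent to the statement $\bar H_k(F_{N-1}) \cong \bar H_k(Y)$ --- i.e.\ the top stratum $F_N - F_{N-1}$ contributes nothing to Borel--Moore homology in the stated range. The long exact sequence
$$\cdots \to \bar H_{k+1}(F_N - F_{N-1}) \to \bar H_k(F_{N-1}) \to \bar H_k(Y) \to \bar H_k(F_N - F_{N-1}) \to \bar H_{k-1}(F_{N-1}) \to \cdots$$
then reduces the lemma to showing that $\bar H_j(F_N - F_{N-1}) = 0$ for both $j = k$ and $j = k+1$ whenever $k > 2e_d - N$.

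The heart of the argument is a dimension estimate on the top stratum. The stratum $F_N - F_{N-1}$ corresponds to the top cell $I = \{1, \dots, N\}$ of the cubical space and fibers as an open $(N{-}1)$-simplex bundle over the base parametrizing tuples $(f, \{p_1, \dots, p_{N-1}\})$ with $f \in \bar \Sigma^{\ge N}$ and the $p_i$ distinct singular zeros of $f$. The defining condition $f \in \bar \Sigma^{\ge N}$ forces $f$ to carry at least $N$ singular points, and the same count as in Proposition \ref{is} (each singular point imposing $n+1$ linear conditions on $V_v$ while contributing $n$ positional parameters) gives
$$\dim_{\mathbb{C}} \bar \Sigma^{\ge N} \le e_d - N, \qquad \dim_{\mathbb{R}}(F_N - F_{N-1}) \le 2(e_d - N) + (N - 1) = 2e_d - N - 1.$$
Since Borel--Moore homology vanishes strictly above the real dimension of a space, $\bar H_j(F_N - F_{N-1}) = 0$ for every $j > 2e_d - N - 1$. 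In particular, for $k > 2e_d - N$ both $j = k$ and $j = k+1$ lie in this vanishing range, and the long exact sequence above collapses to deliver the claimed isomorphism.

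The main obstacle is the codimension estimate on $\bar \Sigma^{\ge N}$: if the $(n+1)N$ conditions imposed by having $N$ prescribed singular points failed to be linearly independent on $V_v$, components of excess dimension could appear and ruin the bound. I would control this by invoking the same Lemma 3.2 of \cite{VW} already used to justify the vector-bundle structure in Proposition \ref{is}, now applied over $\bar \Sigma^{\ge N}$ and to the top cubical cell, so that the generic-codimension estimate is valid on the whole stratum in the range $N = (d-1)/2$ dictated by the Vassiliev resolution.
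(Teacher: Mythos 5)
Your overall strategy --- a closed/open decomposition, the Borel--Moore long exact sequence, and a dimension count showing the excised piece has no Borel--Moore homology above degree $2e_d-N$ --- is exactly the paper's strategy, and your key numerical input ($\bar\Sigma^{\ge N}$ has codimension $\ge N$ in $V_v$, since each of $N$ singular points imposes $n+1$ conditions while moving in only $n$ parameters) is the same one the paper uses. The problem is which subspace you excise.

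You read ``$Y-F_N$'' as $F_{N-1}$, i.e.\ you delete only the top filtration stratum $F_N-F_{N-1}$. The paper's proof makes clear that the $F_N$ of the lemma is a different set: it is the entire closed subspace of the realization lying over $\bar\Sigma^{\ge N}$, namely the union of the locally closed pieces
$$\phi_k=\{((f,x_1,\dots,x_k),p)\ |\ f\in\bar\Sigma^{\ge N},\ x_i\ \text{singular zeroes of}\ f,\ p\in\Delta_k\}$$
for \emph{all} $k$ from $0$ to $N-1$ --- that is, all simplices whose index set contains the distinguished vertex $N$, not just the top-dimensional ones. This matters for how the lemma is used: after excising the full subspace over $\bar\Sigma^{\ge N}$, the remaining space $Y-F_N$ is filtered with strata that are genuinely $\Delta^\circ$-bundles over configuration-space bundles, as in Propositions \ref{is} and \ref{cohoeq}. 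If you remove only $F_N-F_{N-1}$, the leftover space $F_{N-1}$ still contains the pieces $X_{I\cup\{N\}}\times\Delta^\circ$ with $|I|<N-1$, so its filtration strata are \emph{not} the configuration-space bundles of Proposition \ref{is}, and the $E_1$-page identification driving the rest of the computation fails. So as written your argument proves a true but insufficient statement.

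The repair is short and entirely within your method: run your dimension estimate on each $\phi_k$ separately. Each $\phi_k$ fibers over $\uconf_k(\PP^n-p)$ with fiber $\Delta^k$ times (an affine piece of) the polynomials in $\bar\Sigma^{\ge N}$ singular at the given $k$ points, and the codimension-$N$ bound gives $\dim_{\RR}\phi_k\le 2(e_d-(n+1)N)+2kn+k<2e_d-N$ for $k\le N-1$. Hence $\bar H_*(F_N;\QQ)=0$ for $*>2e_d-N$, and the long exact sequence of the pair $F_N\hookrightarrow Y$ finishes the proof --- which is precisely the paper's argument. (Your concern about excess components of $\bar\Sigma^{\ge N}$ and the appeal to Lemma 3.2 of \cite{VW} is well placed and applies verbatim in this corrected setting.)
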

\begin{proof}
 We first will try to bound the $\bar H_* (F_N;\QQ)$  and then use the long exact sequence of the pair. 
$F_N$ is the union of locally closed subspaces $$\phi_k = \{(f, x_1,\dots,x_k), p| f \in \Sigma^{\ge N}, x_i \textrm{ are singular zeroes of }f, p \in\Delta_k \}.$$ We have a surjection $\pi: \phi_k \to \uconf_k(\PP^n - p) $. This map $\pi$ is in fact a fibre bundle with fibres $\Delta^k \times \CC^{e_d-N(n+1)}$. The space $\uconf_k(\PP^n - p)$ is $kn$ dimensional. Therefore $$\bar H_*(\phi_k;\QQ) = 0  \textrm{ if }  * > 2(e_d -(n+1)N) +kn < 2e_d -N.$$ This implies that for all $k$, $\bar H_*(\phi_k;\QQ) = 0$, if $* > 2e_d -N$. This implies $\bar H_*(F_N;\QQ) = 0$, if $* > 2e_d -N$ .
By the long exact sequence in Borel Moore homology associated to the pair $F_N \hookrightarrow Y$, $\bar H_k(Y - F_N;\QQ) \cong \bar H_k(Y;\QQ) $ for $k > 2e_d -N$.
\end{proof}

\section{Interlude}

In \cite{T}, Tommasi proves the following result:
\begin{thm}[\cite{T}]\label{T}
Let $d,n\ge 1$. Let $f \in X_{d,n}$. Let $\psi: GL_{n+1}(\CC) \to X_{d,n}$ be the orbit map defined by $ \psi(g) = g\cdot f$. 

Then $\psi^*: H^k(X_{d,n},\QQ) \to H^k(GL_{n+1}(\CC),\QQ)$ is an isomorphism for $k < \frac{d+1}{2}$. 

\end{thm}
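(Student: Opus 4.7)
The plan is to apply the Vassiliev method directly to the discriminant complement $\Sigma_{d,n} := V_{d,n} \setminus X_{d,n}$, paralleling the fixed-point construction already reviewed in Section 3 but without the basepoint constraint. Since $V_{d,n}$ is an affine (hence contractible) ambient space, Alexander duality identifies
$$H^k(X_{d,n}; \QQ) \;\cong\; \bar H_{2\dim_\CC V_{d,n} - k - 1}(\Sigma_{d,n}; \QQ),$$
so it suffices to compute $\bar H_*(\Sigma_{d,n}; \QQ)$ in the appropriate top-degree range.

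Next I would build a cubical resolution $\rho \colon |\mathscr{X}| \to \Sigma_{d,n}$ by stratifying polynomials according to their singular zero loci, cut off at $N = \frac{d-1}{2}$. The properness and homotopy-equivalence arguments used in Section 3 apply verbatim. The natural filtration $F_k = \mathrm{im}\bigl(\coprod_{|I|\le k} X_I \times \Delta_{|I|}\bigr)$ produces a spectral sequence whose $E_1$ page, in the range $p < N$, takes the form
$$E_1^{p,q} \;\cong\; H_{q - \alpha(p)}(\uconf_p(\PP^n); \pm\,\QQ),$$
where $\alpha(p)$ is a dimension shift coming from the vector bundle structure, exactly as in Propositions \ref{is} and \ref{cohoeq}. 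A Borel--Moore dimension count analogous to the last lemma of Section 3 shows that $F_N$ contributes only in degrees Alexander-dual to $k \ge \frac{d+1}{2}$, so it can be discarded in the stable range.

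The main input is then the twisted homology $H_*(\uconf_p(\PP^n); \pm\,\QQ)$. Using the Leray spectral sequence for the cover $\pconf_p(\PP^n) \to \uconf_p(\PP^n)$ together with the fact that $H^*(\PP^n; \QQ)$ is concentrated in even degrees, one checks that the sign-isotypic summand contributes a single copy of $\QQ$ in one specific degree for each $p \le n$ and vanishes for $p > n$. Reassembling these contributions via the Alexander duality isomorphism produces, for $k < \frac{d+1}{2}$, a graded vector space isomorphic to the exterior algebra $\Lambda_\QQ(\xi_1, \xi_3, \ldots, \xi_{2n+1})$, which is precisely $H^*(\mathrm{GL}_{n+1}(\CC); \QQ)$.

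Finally, to identify this abstract isomorphism with the map induced by the orbit map $\psi$, I would invoke the $\mathrm{GL}_{n+1}(\CC)$-equivariance of the whole Vassiliev construction together with the Leray--Serre spectral sequence for the fibration $\mathrm{GL}_{n+1}(\CC) \to X_{d,n} \to X_{d,n}/\mathrm{GL}_{n+1}(\CC)$. The preceding computation forces the rational cohomology of the quotient to vanish in positive degrees below $\frac{d+1}{2}$, so the spectral sequence degenerates and the fiber restriction $\psi^*$ becomes an isomorphism in that range. The main obstacle is the third step: accurately tracking the dimension shifts, sign conventions, and degrees in the twisted configuration space computation, since a single bookkeeping error would either shift the stable range or misidentify the exterior algebra generators.
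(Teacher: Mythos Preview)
The paper does not give its own proof of Theorem~\ref{T}; it is quoted from Tommasi~\cite{T}. Section~4 only revisits enough of Tommasi's Vassiliev spectral sequence to extract the degeneration identity of Theorem~\ref{sum}. Your steps 1 and 2 are an accurate outline of that argument and match what the paper records.

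Step 3, however, contains a genuine error. The sign-twisted homology $H_*(\uconf_p(\PP^n);\pm\QQ)$ is \emph{not} one-dimensional. The correct computation, due to Vassiliev and stated in the paper as Proposition~\ref{v}, is
\[
H_*(\uconf_p(\PP^n);\pm\QQ)\;\cong\;H_{*-p(p-1)}\bigl(Gr_p(\CC^{n+1});\QQ\bigr),
\]
so the total rank is $\binom{n+1}{p}$, not $1$; already for $p=2$, $n=2$ one gets the three-dimensional cohomology of $\PP^2$. Your proposed transfer argument does identify $H_*(\uconf_p;\pm\QQ)$ with the sign-isotypic piece of $H_*(\pconf_p(\PP^n);\QQ)$, but even-degree concentration of $H^*(\PP^n)$ does not force that piece to be one-dimensional.

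With the correct Grassmannian input the reassembly step is no longer a bookkeeping check: one must show both that the $E_1$ ranks sum to $h^k(GL_{n+1})$ and that all differentials vanish. That is exactly Theorem~\ref{sum}, proved in the paper via the Schubert count of Proposition~\ref{schsum}, and it is the real content of Tommasi's argument. Your final step is essentially right in spirit, but note that $GL_{n+1}(\CC)\to X_{d,n}\to X_{d,n}/GL_{n+1}(\CC)$ is not literally a fibration; one instead uses the Peters--Steenbrink splitting~\cite{PS} to see that $\psi^*$ is surjective, after which the rank equality forces it to be an isomorphism.
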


In this section we shall look at the proof of Theorem \ref{T} in \cite{T} and use it to prove an identity used later on in this paper.
One of the ingredients in the proof of Theorem \ref{T} is a Vassiliev spectral sequence.
We introduce a new convention, by letting  $h$ denote the dimension of $H$. We also define $Gr(p, n)$ to be the Grassmanian of $p$-planes in $\CC^{n}$. In what follows we shall need a few basic facts about $H_*(Gr(p,n);\QQ)$ and Schubert symbols. Let $$0=E_0 \subsetneq E_1 \dots \subsetneq E_{n-1} \subsetneq E_n = \CC^n $$ be a complete flag.
Given $U \in Gr(p,n)$, we can associate to it a sequence of numbers, $a_i =\textrm{dim} U \cap E_i$. These $a_i$ satisfy the following conditions: $$0 \le a_{i+1} -a_i \le 1,a_0 =0 \textrm{ and }  a_n =p.$$

Such sequences are called \emph{Schubert symbols}.
Let $\bold{a} = (a_0 \dots a_n)$. We call $\bold{a}$ a Schubert symbol if $0 \le a_{i+1} -a_i \le 1$,$a_0 =0$ and $ a_n =p$. 
Associated to each Schubert symbol $\bold{a}$ we have a subvariety $W_{\bold{a}}\subseteq{Gr(p,\CC^n)}$ defined as follows. We  define 

$$W_{\bold{a}} = \overline{\{U \subseteq \CC^n| \textrm{dim} U = a_i\}}.$$

The main result we will be using is the following.
\begin{thm}\label{schmain}
Let $\bold{a}$ be a Schubert symbol. The classes $[W_{\bold{a}}] \in H_*(Gr(p, n);\QQ)$ form a basis.
\end{thm}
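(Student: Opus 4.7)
The plan is to establish the classical Schubert cell decomposition and deduce the basis statement from a dimension-parity argument. Fix the complete flag $E_0 \subsetneq \dots \subsetneq E_n = \CC^n$ and, for each Schubert symbol $\bold{a}$, introduce the open Schubert cell
\[
C_{\bold{a}} := \{U \in Gr(p,n) : \dim(U \cap E_i) = a_i \text{ for all } i\},
\]
so that by definition $W_{\bold{a}} = \overline{C_{\bold{a}}}$.

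First I would check that $Gr(p,n) = \bigsqcup_{\bold{a}} C_{\bold{a}}$: given any $U$, the sequence $a_i(U) := \dim(U \cap E_i)$ is nondecreasing with jumps of at most one (since intersecting with a hyperplane drops dimension by at most one), with $a_0=0$ and $a_n=p$, so it is a Schubert symbol, and $U$ lies in the unique $C_{\bold{a}(U)}$. Second I would show that each $C_{\bold{a}}$ is biholomorphic to an affine space $\CC^{d(\bold{a})}$ by putting a basis of $U$ in reduced echelon form relative to the flag: the ``pivot'' rows are forced to lie in the jump positions $\{i : a_i > a_{i-1}\}$, and the remaining entries are free complex parameters, giving an explicit isomorphism with an affine cell.

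Third, because every $C_{\bold{a}}$ is a complex affine cell, it has even real dimension. Hence the cellular chain complex of this CW decomposition is concentrated in even degrees and therefore has zero differentials. By standard cellular homology, the resulting classes $[C_{\bold{a}}]$ (taken as Borel--Moore fundamental classes of the cells, which agree with singular homology since $Gr(p,n)$ is compact) form a $\QQ$-basis of $H_*(Gr(p,n);\QQ)$. Finally, the frontier $W_{\bold{a}} \setminus C_{\bold{a}}$ lies in the union of strictly lower-dimensional cells, so passing to the closure does not change the homology class: $[W_{\bold{a}}] = [C_{\bold{a}}]$ in $H_*(Gr(p,n);\QQ)$, yielding the claimed basis.

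The only subtle point is the third step, matching the fundamental class of the closed, possibly singular Schubert variety $W_{\bold{a}}$ with the cellular class of the open cell $C_{\bold{a}}$; the cleanest way is to argue in Borel--Moore homology, where $\bar H_{2d(\bold{a})}(C_{\bold{a}};\QQ) \to \bar H_{2d(\bold{a})}(W_{\bold{a}};\QQ)$ is an isomorphism because the complement $W_{\bold{a}} \setminus C_{\bold{a}}$ has strictly smaller complex dimension, and then push forward to $Gr(p,n)$. Everything else is a direct verification using the flag coordinates.
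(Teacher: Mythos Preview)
Your argument is correct and is the classical proof via the Schubert cell decomposition. The paper itself does not give a proof of this statement at all; it simply cites page 1071 of Kleiman--Laksov, \emph{Schubert Calculus}, where exactly the cell-decomposition argument you outline is carried out. So your proposal matches the intended reference.
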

For a proof of Theorem \ref{schmain} see page 1071 of \cite{KL}.

\begin{prop}\label{schsum}
Let $n$ be a positive integer. Then $$\sum_{k,p} h_k(Gr(p, \CC^n);\QQ) = 2^n.$$
\end{prop}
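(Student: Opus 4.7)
The plan is to count Schubert symbols using Theorem \ref{schmain} and then invoke the binomial identity $\sum_p \binom{n}{p} = 2^n$.

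By Theorem \ref{schmain}, the Schubert classes $[W_{\bold{a}}]$ form a basis for $\HH_*(Gr(p,n);\QQ)$, so the total Betti number $\sum_k h_k(Gr(p,n);\QQ)$ is equal to the number of Schubert symbols $\bold{a} = (a_0, \dots, a_n)$ with $a_0=0$, $a_n=p$, and $0 \le a_{i+1}-a_i \le 1$.

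The key observation is that such a Schubert symbol is determined uniquely by the sequence of differences $\epsilon_i := a_i - a_{i-1} \in \{0,1\}$ for $i=1,\dots,n$. The boundary conditions $a_0=0$ and $a_n=p$ translate to the single constraint $\sum_{i=1}^n \epsilon_i = p$. Hence the number of Schubert symbols for $Gr(p,n)$ is exactly $\binom{n}{p}$, the number of binary strings of length $n$ with $p$ ones.

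Summing over all $p$ (from $0$ to $n$, noting $Gr(p,n)$ is empty or a point outside this range) then gives
$$\sum_{k,p} h_k(Gr(p, \CC^n);\QQ) = \sum_{p=0}^n \binom{n}{p} = 2^n,$$
as desired. There is no real obstacle here; the only substantive input is Theorem \ref{schmain}, and the rest is the elementary bijection between Schubert symbols and binary strings of length $n$.
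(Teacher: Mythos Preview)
Your proof is correct and follows essentially the same approach as the paper: both invoke Theorem \ref{schmain} and then biject Schubert symbols with binary strings of length $n$ via the difference map $\epsilon_i = a_i - a_{i-1}$. The only cosmetic difference is that you first count the Schubert symbols for fixed $p$ as $\binom{n}{p}$ and then sum, whereas the paper sums over $p$ first and then counts all binary strings at once.
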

\begin{proof}
By Theorem \ref{schmain}, $$\sum_{k,p} h_k(Gr(p, \CC^n);\QQ) = \sum_p \#\{(a_0, \dots a_n)  |0 \le a_{i+1} -a_i \le 1, a_0=0, a_n = p\} $$

$$ = \#\{(a_0, \dots a_n)  |0 \le a_{i+1} -a_i \le 1, a_0=0\}
$$
$$
=\#\{(b_1, \dots b_n) \in \{0,1\} \}
.$$

The last equality follows because if we are given a sequence of $a_i$, we can uniquely obtain a sequence of $b_i$, by letting $b_i = a_i -a_{i-1}$. 
\end{proof}

Our main aim of this section is to prove the following technical result that is necessary for our purposes.
\begin{thm}\label{sum}
The Vassiliev spectral sequence in \cite{T} degenerates in the stable range: if $p <\frac{d+1}{2}$ and if $ q>0$, then $E_1^{p,q} \cong E^\infty_{p,q}$. 

Equivalently, for $k<\frac{d+1}{2}$, 
\begin{equation}\label{sumeq}
    \sum_{p} h_{2(p+1)(n+1) - p  -k -1 } (\uconf_p(\PP^n);\QQ) = h_k(GL_{n+1};\QQ)
\end{equation}
(These are the diagonal terms in the spectral sequence).
\end{thm}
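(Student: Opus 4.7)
The plan is to derive both the degeneration and its numerical reformulation from Tommasi's Theorem~\ref{T} via Alexander duality and a direct dimension count on her Vassiliev spectral sequence. First I would recall this spectral sequence in the setting of $X_{d,n}$: $E_1^{p,q} = \bar H_{p+q}(F_p - F_{p-1};\QQ)$ converges to $\bar H_*(\Sigma;\QQ)$, where $\Sigma = V_{d,n} - X_{d,n}$. In the stable range $p < N = (d-1)/2$, the same argument as in Proposition~\ref{cohoeq} (with $\uconf_p(\PP^n)$ in place of $\uconf_p(\PP^n - p)$) identifies $E_1^{p,q} \cong H_{q + 2(n+1)(p+1) - 2e_d}(\uconf_p(\PP^n); \pm\QQ)$. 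Substituting $q = 2e_d - 1 - k - p$, the $E_1$-entry in column $p$ on the anti-diagonal $\{p+q = 2e_d - 1 - k\}$ becomes $H_{2(n+1)(p+1) - p - k - 1}(\uconf_p(\PP^n); \pm\QQ)$.

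\textbf{Matching anti-diagonals.} Next I would combine Alexander duality $\bar H_{2e_d - 1 - k}(\Sigma;\QQ) \cong H^k(X_{d,n};\QQ)$ with Theorem~\ref{T} (which says $H^k(X_{d,n};\QQ) \cong H^k(\mathrm{GL}_{n+1}(\CC);\QQ)$ for $k < (d+1)/2$) to conclude that the total dimension of this anti-diagonal at $E_\infty$ is exactly $h_k(\mathrm{GL}_{n+1};\QQ)$. Since $E_\infty^{p,q}$ is always a subquotient of $E_1^{p,q}$, the desired degeneration statement is equivalent to the $E_1$-anti-diagonal sum also equaling $h_k(\mathrm{GL}_{n+1};\QQ)$, which after the substitution above is precisely the identity~(\ref{sumeq}). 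In the stable range $p < N$, the constraint $q>0$ on the anti-diagonal is automatic (the columns where $q$ could drop to $0$ lie far outside the stable range for $p$), so this accounts for all entries under consideration.

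\textbf{The identity.} The remaining and genuine obstacle is the identity itself. To prove it I would compute the sign-isotypic part $H^*(\uconf_p(\PP^n); \pm\QQ) \cong H^*(\pconf_p(\PP^n); \QQ)^{\mathrm{sign}}$ using the Leray spectral sequence for the $S_p$-cover $\pconf_p(\PP^n) \to \uconf_p(\PP^n)$, or alternatively via the stratification developed in \cite{T}. The resulting Poincar\'e polynomial should agree, after the shift identified above, with that of the Grassmannian $Gr(p, \CC^{n+1})$. Summing over $p$ then reduces the identity to $\sum_p \dim H^*(Gr(p, \CC^{n+1});\QQ) = 2^{n+1}$ (Proposition~\ref{schsum} applied with $n+1$ in place of $n$), which matches $\dim H^*(\mathrm{GL}_{n+1}(\CC);\QQ) = 2^{n+1}$ coming from $H^*(\mathrm{GL}_{n+1}(\CC);\QQ) \cong \Lambda_\QQ(x_1, x_3, \ldots, x_{2n+1})$. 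The main technical point is the degree-by-degree matching of Schubert symbols for $Gr(p, \CC^{n+1})$ with the exterior generators of $H^*(\mathrm{GL}_{n+1})$, a combinatorial identity I would extract from Tommasi's Vassiliev analysis in \cite{T}.
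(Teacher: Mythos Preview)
Your approach matches the paper's: both argue via Alexander duality and Tommasi's theorem that the $E_\infty$ anti-diagonal has total dimension $h_k(\mathrm{GL}_{n+1};\QQ)$, then identify the $E_1$ terms with shifted Grassmannian homology and invoke the Schubert count of Proposition~\ref{schsum}. The paper simply cites Vassiliev's Lemma~2 for the identification $H_*(\uconf_p(\PP^n);\pm\QQ)\cong H_{*-p(p-1)}(Gr(p,\CC^{n+1});\QQ)$ rather than rederiving it via a Leray argument, but that is a matter of citation versus reproof.

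The one place you make your life harder than necessary is the final step. You already note that $E_\infty$ is a subquotient of $E_1$, which gives
\[
\sum_{p} h_{2(p+1)(n+1)-p-k-1}(\uconf_p(\PP^n);\pm\QQ)\ \ge\ h_k(\mathrm{GL}_{n+1};\QQ)
\]
for each $k$ in the stable range. The paper then simply sums this inequality over \emph{all} $k$ (not just over $p$): the left side becomes $\sum_{p,j} h_j(Gr(p,\CC^{n+1});\QQ)=2^{n+1}$ by Proposition~\ref{schsum}, and the right side is $\dim H^*(\mathrm{GL}_{n+1}(\CC);\QQ)=2^{n+1}$. Equality of the totals forces equality for every individual $k$. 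So the ``degree-by-degree matching of Schubert symbols with the exterior generators of $H^*(\mathrm{GL}_{n+1})$'' that you flag as the main technical point is not needed at all; once you insert this totals-versus-termwise trick, your argument and the paper's coincide.
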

\begin{proof}
We already know that $$\sum_{p } h_{2(p+1)(n+1) - p  -k -1 } (\uconf_p(\PP^n); \pm \QQ) \ge h_k(GL_{n+1};\QQ) $$
 because the left hand side of \eqref{sumeq} are the appropriate terms in a spectral sequence converging to the right hand side of \eqref{sumeq}.
 
 It suffices to prove that
 
 $$
\sum_k \sum_{p } h_{2(p+1)(n+1) - p  -k -1} (\uconf_p(\PP^n); \pm \QQ)
$$
$$
= \sum _k h_k(GL_{n+1};\QQ) = 2^{n+1}.
 $$
 
  Lemma 2 in \cite{V} states that: $$h_{2(p+1)(n+1) - p  -k -1 } (\uconf_p(\PP^n), \pm \QQ) = h_{2(p+1)(n+1) - p  -k -1  -p(p-1)}(Gr(p, \CC^{n+1}); \QQ).$$ 
 Therefore $$
\sum_k \sum_{p } h_{2(p+1)(n+1) - p  -k -1} (\uconf_p(\PP^n); \pm \QQ) = \sum_k\sum_p h_k(Gr_(p, \CC^{n+1}); \QQ).
$$
 By Proposition \ref{schsum} this is equal to $2^{n+1}$.
 
\end{proof}

\section{Computation}
We would  like to know what the groups $\bar H_*(\uconf_{k+1}(\PP^n - p); \pm \QQ)$ are. 
First note that  in \cite{V} Vassiliev proves that :
\begin{prop}[\cite{V}]\label{v}
Let $k,n>0$. Then,$$H_*(\uconf_{k}(\PP^n); \pm \QQ) \cong H_{*-(k)(k-1)}(Gr_k(\CC^{n+1}); \QQ).$$
\end{prop}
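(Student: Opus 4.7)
The plan is to construct a fibration over the Grassmannian $Gr_k(\CC^{n+1})$ whose fiber has $\pm\QQ$-cohomology concentrated in a single degree, compute that degree via a Weyl-group calculation, and then control the complement by induction.

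First I would let $U\subseteq\uconf_k(\PP^n)$ be the open locus of configurations whose $k$ points have linearly independent homogeneous coordinates in $\CC^{n+1}$; note that $U$ is empty for $k>n+1$, matching the vanishing of $Gr_k(\CC^{n+1})$ in that range. The span map $\sigma\colon U\to Gr_k(\CC^{n+1})$ sends a configuration to the $k$-plane it spans, and transitivity of $GL_{n+1}$ on $Gr_k(\CC^{n+1})$ shows that $\sigma$ is a Zariski-locally-trivial fibration with fiber $F$, the open locus in $\uconf_k(\PP^{k-1})$ of spanning configurations.

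Next I would compute $H_*(F;\pm\QQ)$. The ordered double cover $\widetilde F$ identifies with $GL_k(\CC)/(\CC^*)^k$, since an ordered spanning $k$-tuple in $\PP^{k-1}$ is a basis of $\CC^k$ modulo per-vector scaling. The quotient map $GL_k(\CC)/(\CC^*)^k \to GL_k(\CC)/B$, with $B$ the upper triangular Borel, has affine fibers $B/(\CC^*)^k$, so $\widetilde F$ has the rational cohomology of the complete flag variety $\mathrm{Fl}_k$, with the natural $S_k$-action (by right multiplication by permutation matrices) corresponding to the Weyl-group action on $H^*(\mathrm{Fl}_k;\QQ)$. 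Since $H^*(\mathrm{Fl}_k;\QQ)$ is the regular representation of $W=S_k$, its sign isotypic component is one-dimensional and concentrated in top degree $2\binom{k}{2}=k(k-1)$. Poincaré duality for the smooth complex $k(k-1)$-dimensional manifold $F$ then yields $H_*(F;\pm\QQ)=\QQ$ concentrated in degree $k(k-1)$. Because $Gr_k(\CC^{n+1})$ is simply connected, the Leray spectral sequence for $\sigma$ has only one nonzero row and collapses to give
$$H_m(U;\pm\QQ) \;\cong\; H_{m-k(k-1)}(Gr_k(\CC^{n+1});\QQ).$$

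The remaining step is to show the inclusion $U\hookrightarrow\uconf_k(\PP^n)$ induces an isomorphism on $H_*(-;\pm\QQ)$. I would stratify the complement by the projective dimension of the span of the configuration; each stratum fibers over a lower Grassmannian $Gr_j(\CC^{n+1})$ with $j<k$, whose fiber is an open subset of $\uconf_k(\PP^{j-1})$. Proceeding by induction on $n$, this reduces to the vanishing $H_*(\uconf_k(\PP^m);\pm\QQ)=0$ for $m<k-1$, which says that $k$ points forced to lie in a projective space too small to contain them in general position carry no cohomology with the sign system. This vanishing is the \emph{main obstacle} and the heart of the argument: the flag-variety identification and the Weyl-group computation are elegant but fairly standard, while closing the induction requires a careful simultaneous argument because the complement strata themselves have ``wrong-dimension'' fibers of the same type we are trying to control.
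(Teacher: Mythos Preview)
The paper does not give its own proof of this proposition; it simply quotes the result from Vassiliev \cite{V}. So there is nothing to compare against directly, and your outline should be judged on its own merits.

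Your computation of the open stratum is correct and is the heart of the matter: the identification of the ordered fiber with $GL_k/(\CC^*)^k$, the contraction to the flag variety, and the fact that $H^*(\mathrm{Fl}_k;\QQ)$ carries the regular representation of $S_k$ so that its sign isotypic piece is one-dimensional in top degree $k(k-1)$, are exactly right. The Serre spectral sequence over $Gr_k(\CC^{n+1})$ then collapses as you say.

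The one genuine issue is the step you flagged yourself. In ordinary homology the inclusion of an open set does not interact well with the stratification: knowing that $\uconf_k(\PP^{j-1})$ has vanishing $\pm\QQ$-homology (by induction on $n$, since $Gr_k(\CC^{j})=\varnothing$ for $j<k$) does not by itself force the open stratum $F_j\subset\uconf_k(\PP^{j-1})$ of spanning configurations to have vanishing $\pm\QQ$-homology. The clean fix is to run the whole argument in Borel--Moore homology, where a locally closed stratification gives long exact sequences in the right direction. A double induction (outer on $n$, inner on the span dimension $j$) then shows $\bar H_*(F_j;\pm\QQ)=0$ for all $j<k$, hence every lower stratum $S_j\to Gr_j(\CC^{n+1})$ has vanishing $\bar H_*(\,\cdot\,;\pm\QQ)$, and the open inclusion $U\hookrightarrow\uconf_k(\PP^n)$ is an isomorphism on $\bar H_*(\,\cdot\,;\pm\QQ)$. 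Poincar\'e duality on these smooth varieties then returns the statement in ordinary homology; conveniently, because the fiber $F$ has its unique $\pm\QQ$-class in the middle real degree $k(k-1)$ of a $2k(k-1)$-manifold, the shift by $k(k-1)$ is the same in $H_*$ and in $\bar H_*$.

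For comparison, Vassiliev's original proof is organized differently: he uses his conical (simplicial) resolution of the discriminant-type locus rather than the span map to the Grassmannian, though the same identification of twisted configuration homology with Grassmannian homology emerges. Your approach is more elementary and geometric; his is better adapted to the spectral-sequence machinery used elsewhere in this paper.
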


Also note that  in light of Theorem \ref{schmain} the homology of Grassmannians is  well understood in terms of Schubert cells.

Consider the long exact sequence in Borel Moore homology associated to $$\uconf_{k+1}(\PP^n - p) \subseteq \uconf_{k+1}(\PP^n) \hookleftarrow \uconf_k(\PP^n - p).$$ The last inclusion is defined by the map $\phi :\uconf_k(\PP^n - p) \to \uconf_{k+1}(\PP^n)$ where $\phi(\{x_1 \dots x_n\}) = \{x_1 \dots x_n,p\} $.

We obtain a long exact sequence as follows:
\begin{equation}\label{splitLES}
     \dots \to \bar H_*(\uconf_k(\PP^n - p);\pm \QQ) \to \bar H_*(\uconf_{k+1}(\PP^n);\pm \QQ) \to \bar H_*(\uconf_{k+1}(\PP^n - p);\pm \QQ) \to 
\dots
\end{equation}

\begin{prop}\label{split}
Let $k,n>0$. Then there is a canonical decomposition $$H_*(\uconf_{k+1}(\PP^n);\pm \QQ) \cong \bar H_*(\uconf_k(\PP^n - p);\pm \QQ) \oplus \bar H_*(\uconf_k(\PP^n - p);\pm \QQ), $$ due to the fact that \eqref{splitLES} splits.
\end{prop}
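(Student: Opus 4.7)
I want to show that \eqref{splitLES} breaks into short exact sequences; since every term is a finite-dimensional $\QQ$-vector space, these automatically split, yielding the claimed direct-sum decomposition. Equivalently, I will show that the connecting homomorphism in \eqref{splitLES} is identically zero.

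The idea is to exploit the Schubert cell structure on $Gr(k+1,\CC^{n+1})$ that Vassiliev's isomorphism (Proposition \ref{v}) transports to $\bar H_*(\uconf_{k+1}(\PP^n); \pm\QQ)$. Fix a complete flag $0 \subset \langle p \rangle = E_1 \subset E_2 \subset \dots \subset E_{n+1} = \CC^{n+1}$ starting with the line $\langle p \rangle$ corresponding to $p$. By Theorem \ref{schmain} the Schubert classes $[W_{\bold{a}}]$ form a basis of $H_*(Gr(k+1,\CC^{n+1}); \QQ)$, and these classes split into two disjoint families according to whether $a_1 = 1$ (subspaces containing $\langle p\rangle$) or $a_1 = 0$ (subspaces disjoint from $\langle p\rangle$). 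In the binary-sequence bookkeeping of Proposition \ref{schsum}, these families count respectively sequences of length $n$ with $k$ ones and with $k+1$ ones, exactly realizing Pascal's identity $\binom{n+1}{k+1} = \binom{n}{k} + \binom{n}{k+1}$.

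The key step is to identify these two families of Schubert classes with the outer terms of \eqref{splitLES}: the closed subvariety $\{U : p \in U\} \subset Gr(k+1,\CC^{n+1})$ is isomorphic to $Gr(k,\CC^n)$ via $U \mapsto U/\langle p\rangle$, and under Vassiliev's construction it corresponds to the image of the closed inclusion $\phi : \uconf_k(\PP^n - p) \hookrightarrow \uconf_{k+1}(\PP^n)$; while its open complement corresponds to $\uconf_{k+1}(\PP^n - p)$. Granting these identifications, the Schubert cells produce a direct-sum decomposition of $\bar H_*(\uconf_{k+1}(\PP^n); \pm\QQ)$ matching the two outer terms of \eqref{splitLES}, and the connecting homomorphism is forced to vanish.

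The main obstacle is establishing the geometric matching in the previous paragraph, which essentially asks for a version of Proposition \ref{v} adapted to the distinguished point $p$. A safer route that avoids this is purely quantitative: show independently that $\dim_\QQ \bar H_*(\uconf_j(\PP^n - p); \pm\QQ) = \binom{n}{j}$ for all $j$ in the relevant range. Combined with the \emph{a priori} inequality $\dim(A) + \dim(C) \geq \dim(B)$ that exactness of \eqref{splitLES} imposes on total dimensions, Pascal's identity then forces equality and thus the vanishing of the connecting map in every degree. This independent dimension count may be obtained by a direct Vassiliev-type computation on $\uconf_j(\PP^n - p)$, or bootstrapped from \eqref{splitLES} by using the parity of Borel--Moore degrees of the middle term (which is concentrated in even degrees by Proposition \ref{v}, since the Schubert classes do).
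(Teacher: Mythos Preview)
Your primary approach—splitting the Schubert basis of $Gr(k+1,\CC^{n+1})$ according to whether $a_1=0$ or $a_1=1$ relative to a flag through $\langle p\rangle$, and matching the two pieces to the open and closed strata of \eqref{splitLES}—is exactly what the paper does: its proof is a one-line citation of Lemma~2 of \cite{V}, and the Remark immediately afterward records the resulting basis of $\bar H_*(\uconf_k(\PP^n-p);\pm\QQ)$ by Schubert symbols with $a_1=0$. The geometric matching you flag as the main obstacle is precisely the content of that citation, so there is no gap, and your alternative dimension-count and parity routes are unneeded.
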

\begin{proof}

 Lemma 2 of  \cite{V} implies that \eqref{splitLES} decomposes into split short exact sequences, i.e. $$\bar H_*(\uconf_{k+1}(\PP^n);\pm \QQ) \cong \bar H_*(\uconf_k(\PP^n - p);\pm \QQ) \oplus \bar H_*(\uconf_k(\PP^n - p);\pm \QQ) .$$
\end{proof} 

\begin{rem}
In fact the $H_*(\uconf_k(\PP^n - p);\pm\QQ)$ has a basis  given by  Schubert symbols with $a_1 =0$.  
\end{rem}

 \begin{prop}\label{ifnodiff}
 If the Vassiliev spectral sequence has no  nonzero differentials and $k < \frac{d-1}{2}$, then $H^k(X_v) \cong H^k(G_p)$ as vector spaces. 
 \end{prop}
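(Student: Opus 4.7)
The plan is to read off $\bar H_*(\Sigma_v; \QQ)$ from the degenerating Vassiliev spectral sequence, transfer this to $H^*(X_v; \QQ)$ via Alexander duality, and then combine the splitting in Proposition \ref{split} with Tommasi's Theorem \ref{T} to identify the resulting dimensions with those of $H^*(G_p; \QQ) = H^*(GL_n(\CC); \QQ)$.

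First I would combine the degeneration hypothesis with the preceding lemma, which permits the $F_N$-stratum to be discarded in the relevant range. Together with the proper homotopy equivalence $\rho: |X| \to \Sigma_v$ and Proposition \ref{cohoeq}, this identifies $\bar H_\ell(\Sigma_v; \QQ)$ with a direct sum $\bigoplus_{p < N} \bar H_{j(p,\ell)}(\uconf_p(\PP^n - p); \pm\QQ)$ in suitable degrees $j(p, \ell)$. Alexander duality in the ambient affine space $V_v \cong \CC^{e_d}$ gives $H^k(X_v; \QQ) \cong \bar H_{2e_d - k - 1}(\Sigma_v; \QQ)$ for $k \ge 1$, the case $k = 0$ being trivial by connectedness of $X_v$ and $G_p$. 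Consequently $\dim H^k(X_v; \QQ)$ is expressed as a sum over $p$ of dimensions of twisted Borel--Moore homology groups of $\uconf_p(\PP^n - p)$.

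Next I would apply Proposition \ref{split}, which yields
\[
\dim \bar H_j(\uconf_{p+1}(\PP^n); \pm\QQ) = \dim \bar H_j(\uconf_p(\PP^n - p); \pm\QQ) + \dim \bar H_j(\uconf_{p+1}(\PP^n - p); \pm\QQ)
\]
in every degree $j$. Summing this identity over $p$ at the specific degrees produced by the spectral sequence above, and reindexing $p \mapsto p+1$ (which contributes an additional $2n + 1$ to the relevant degree), the computation rearranges into a generating function identity of the form
\[
(1 + t^{2n+1})\, P_{X_v}(t) \equiv P_{X_{d,n}}(t) \pmod{t^{(d-1)/2}},
\]
where $P_{X_v}(t) = \sum_k \dim H^k(X_v; \QQ)\, t^k$ and $P_{X_{d,n}}$ is the analogous polynomial for Tommasi's $X_{d,n}$.

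Finally, by Tommasi's Theorem \ref{T}, $P_{X_{d,n}}(t)$ agrees with the Poincar\'e polynomial of $GL_{n+1}(\CC)$ modulo $t^{(d+1)/2}$, and the fibration $GL_n(\CC) \to GL_{n+1}(\CC) \to S^{2n+1}$ yields $P_{GL_{n+1}}(t) = (1 + t^{2n+1})\, P_{GL_n}(t)$. Since $1 + t^{2n+1}$ is a unit in $\QQ[[t]]$, dividing the identity of the previous paragraph gives $P_{X_v}(t) \equiv P_{GL_n}(t)$ in degrees below $(d-1)/2$, which is exactly the desired conclusion. The main obstacle is the reindexing step: one must verify that applying the splitting at the precise degrees arising from the spectral sequence produces exactly the clean factor $(1 + t^{2n+1})$ matching the fibration, and a subsidiary subtlety is ensuring that the Vassiliev spectral sequence conventions used here line up with those behind Theorem \ref{sum}, so that both generating functions live in the same formal power series ring and the final division is legitimate.
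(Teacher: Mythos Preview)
Your proposal is correct and follows essentially the same route as the paper: both arguments combine the degeneration hypothesis with Alexander duality, Proposition \ref{split}, Theorem \ref{sum}, and the identity $h^k(GL_{n+1};\QQ) = h^k(GL_n;\QQ) + h^{k-(2n+1)}(GL_n;\QQ)$. The only difference is cosmetic: the paper packages the resulting recursion as an induction on $k$, while you phrase the identical recursion as division by the unit $1+t^{2n+1}$ in $\QQ[[t]]$.
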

\begin{proof}
  Now in our spectral sequence we had  $E^{p,q}_{1}= \bar H_{q - (2e_d - 2(p+1)(n+1))}(\uconf_{p+1}(\PP^n - p); \pm \QQ)$.
 First collect all terms in the main diagonal, i.e. $$V := \oplus_{p+q =l}  \bar H_{q -( 2D_n - 2(p+1)(n+1))}(\uconf_{p+1}(\PP^n - p); \pm \QQ) $$

It will suffice to prove that  

\begin{equation}\label{mainsumeq}
     \textrm{dim} V = \sum_{p \le 2D_n - k} h_{2(p+1)(n+1) - p  -k -1} (\uconf_p(\PP^n - pt);\pm \QQ) = h^k(GL_n; \QQ ) .
\end{equation}

 Proposition \ref{sum} implies

\begin{equation}\label{ifnodiffeq}
    \sum_{p} h_{2(p+1)(n+1) - p  -k -1 } (\uconf_p(\PP^n); \pm \QQ) = h_k(GL_{n+1}; \QQ).
\end{equation} 

Proposition \ref{v} implies, $$h_{2(p+1)(n+1) - p  -k -1 } (\uconf_p(\PP^n); \pm \QQ) =0 \textrm{ if } p>n$$. 

So as long as $n < 2(D_n +n+1 ) - k$, 
 $$ \sum_{p \le 2(D_n +n+1 ) - k} h_{2(p+1)(n+1) - p  -k -1 } (\uconf_p(\PP^n); \pm \QQ) = \sum_{p } h_{2(p+1)(n+1) - p  -k -1 } (\uconf_p(\PP^n); \pm \QQ).$$
 But the condition $n < 2(D_n +n+1 ) - k$ is equivalent to $k <2(D_n +1) +n$, which is true if $k <N$. We have another equality from Proposition \ref{split}, $$ h_k(\uconf_p(\PP^n - pt); \pm \QQ) +h_{k}(\uconf_{p-1}(\PP^n -pt); \pm \QQ) = h_k (\uconf_p (\PP^n); \pm \QQ).$$

Plugging this into \eqref{ifnodiffeq} we have 
$$
h^k(GL_{n+1}; \QQ) =\sum h_{2(p+1)(n+1) - p  -k} (\uconf_p(\PP^n); \pm \QQ)
$$

$$=  \sum  h_{2(p+1)(n+1) - p  -k -1} (\uconf_p(\PP^n - pt); \pm\QQ) +  h_{2(p+1)(n+1) - p  -k -1} (\uconf_{p-1}(\PP^n- pt); \pm \QQ).
$$

We have the identity $$ h^k (GL_n; \QQ)  + h^{k -(2n+1)}(GL_n; \QQ)= h^k(GL_{n+1} ;\QQ).$$
This implies,
\begin{align}\label{eq}
    & \quad h^k (GL_n; \QQ)  + h^{k -(2n+1)}(GL_n; \QQ)   \\
& \quad = \sum_p  h_{2(p+1)(n+1) - p  -k -1} (\uconf_p(\PP^n - pt);  \QQ) +  h_{2(p+1)(n+1) - p  -k -1} (\uconf_{p-1}(\PP^n- pt); \QQ) .
\end{align}

Now we will try to prove \ref{mainsumeq} by induction on $k$.
For $k =0$, \eqref{mainsumeq} is trivial. By induction $$ h^{k -(2n+1)}(GL_n; \QQ) = \sum_p h_{2(p+1)(n+1) - p  -k -1} (\uconf_{p-1}(\PP^n- pt); \pm \QQ).$$
Putting this into \ref{eq} we obtain
$$\sum_{p } h_{2(p+1)(n+1) - p  -k -1} (\uconf_p(\PP^n - pt);\pm\QQ) = h^k(GL_n; \QQ). $$

\end{proof}

 Now we can look at the Serre Spectral sequence associated to the fibration $$X_v \hookrightarrow X_p \to \CC^n -0.$$
 We observe that if there are no nonzero differentials, then $$H^*(X_p; \QQ) \cong H^*(X_v; \QQ) \otimes \QQ[e_{2n-1}]/e_{2n-1}^2.$$ This is because the Serre spectral sequence degenerates  and since $\QQ[e_{2n-1}]/e_{2n-1}^2$ is a free graded commutative algebra the ring structure of the total space is forced to be the tensor product.

\begin{prop}
Let $d>0$ and $p\in \PP^n$. Then, $$H^*(X_{d,p}; \QQ) \cong H^*(G_p; \QQ) \otimes A$$, where $A$ is $H^*(X_d^p/G_p; \QQ)$.
\end{prop}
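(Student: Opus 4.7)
The plan is to apply the same argument from \cite{PS}, Theorem 1 that was used in the proof of Proposition \ref{atleast} for the $GL_{n+1}(\CC)$-action on $X_{d,n}^*$. Here $G_p \cong GL_n(\CC)$ is a reductive complex algebraic group acting algebraically on the smooth quasi-projective variety $X_d^p$; the action is inherited from the action of $G_p$ on $\PP^n$ fixing $p$, which was already set up in Section 3. So the setup is directly parallel to that of Proposition \ref{atleast}.

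First, I would fix a basepoint $f_0 \in X_d^p$ and consider the orbit map $j: G_p \to X_d^p$, $g \mapsto g \cdot f_0$, together with the quotient map $\pi: X_d^p \to X_d^p/G_p$. Invoking \cite{PS}, Theorem 1 then produces a collection of classes $\{\alpha_i\} \subset H^*(X_d^p; \QQ)$ such that the pullbacks $\{j^*(\alpha_i)\}$ form a basis of $H^*(G_p; \QQ)$, and such that $\pi^*$ makes $H^*(X_d^p; \QQ)$ a free module over $A = H^*(X_d^p/G_p; \QQ)$ with basis $\{\alpha_i\}$.

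The next step is to promote this module-level splitting to an algebra-level tensor product decomposition. The key input, exactly as in Proposition \ref{atleast}, is that $H^*(G_p; \QQ) \cong H^*(GL_n(\CC); \QQ)$ is a free graded-commutative algebra, namely the exterior algebra on odd-degree generators in degrees $1, 3, \ldots, 2n-1$. Since odd-degree classes automatically square to zero in any graded-commutative $\QQ$-algebra, the lifts $\alpha_i$ satisfy the defining relations of $H^*(G_p; \QQ)$, so they generate a subalgebra isomorphic to $H^*(G_p; \QQ)$. Combined with the free-module description, this forces $H^*(X_d^p; \QQ) \cong H^*(G_p; \QQ) \otimes A$ as graded algebras.

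The main (and essentially only) obstacle I anticipate is confirming that the hypotheses of \cite{PS}, Theorem 1 genuinely apply to this action---in particular, that the quotient $X_d^p/G_p$ is well-behaved enough for the Leray--Hirsch-type conclusion to hold. However, this verification is completely analogous to the one implicit in the proof of Proposition \ref{atleast}, and $G_p$ sits inside $GL_{n+1}(\CC)$ as a reductive Levi subgroup, so no genuinely new technical ingredient is required beyond what the earlier argument already provides.
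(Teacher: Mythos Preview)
Your proposal is correct and follows essentially the same approach as the paper: both invoke the Peters--Steenbrink result \cite{PS} for the reductive group $G_p$ acting on $X_d^p$. The paper's proof is the single line ``This follows immediately from Theorem~2 in \cite{PS},'' whereas you unpack the argument in the style of Proposition~\ref{atleast} (which cited Theorem~1 of \cite{PS}); the distinction between the two theorem numbers is only that Theorem~2 packages the tensor-product conclusion directly, while Theorem~1 gives the free-module statement you then upgrade using the freeness of $H^*(GL_n(\CC);\QQ)$ as a graded-commutative algebra.
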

\begin{proof}
This follows immediately from Theorem 2 in \cite{PS}.
\end{proof}

 We will also need the following fact that is a special case of Lemma 2.6 in \cite{D}.
 
 \begin{prop}\label{tens}
Let $d>0$, $k < \frac{d-1}{2}$. Let $U_d^{*} = X_d^* / \CC^*$.
 Then $$H^*(X_d^*; \QQ)  \cong H^*(U_d^*; \QQ) \otimes \QQ[e_1]/(e_1^2),$$ where $|e_1| = 1$.
 \end{prop}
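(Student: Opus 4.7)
The $\CC^*$-action $\lambda \cdot (f, p) = (\lambda f, p)$ is free on $X_d^*$ (since $f \neq 0$ there), making $q: X_d^* \to U_d^*$ a principal $\CC^*$-bundle. My plan is to analyze the Leray--Serre spectral sequence of the fibration $\CC^* \to X_d^* \to U_d^*$. Since $\CC^* \simeq S^1$ is connected and the bundle is principal, the local system is trivial, giving $E_2^{a,b} = H^a(U_d^*;\QQ) \otimes H^b(\CC^*;\QQ)$. Only the rows $b = 0, 1$ are nonzero, so the unique possibly nontrivial differential is the transgression $d_2: E_2^{a,1} \to E_2^{a+2,0}$, which is multiplication by the Euler class $c_1(L) \in H^2(U_d^*;\QQ)$ of the associated complex line bundle $L$.

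The heart of the argument is to show $c_1(L) = 0$. Since $\CC^*$ acts only on the $V_d$-factor of $V_d \times \PP^n \supset X_d^*$, the quotient $U_d^*$ sits inside $\PP(V_d) \times \PP^n$, and the $\CC^*$-bundle $X_d^* \to U_d^*$ is pulled back from the tautological $\CC^*$-bundle $V_d \setminus \{0\} \to \PP(V_d)$. Consequently $c_1(L)$ is the pullback of $\pm h \in H^2(\PP(V_d);\QQ)$ under the composition $U_d^* \to U_d \hookrightarrow \PP(V_d)$, where $U_d$ is the complement of the discriminant hypersurface $\Delta \subset \PP(V_d)$.

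I claim $h$ vanishes in $H^2(U_d;\QQ)$, by a standard divisor--complement observation: the discriminant polynomial realizes $\Delta$ as the zero locus of a section of $\OO_{\PP(V_d)}(\deg \Delta)$, so $\OO(\deg \Delta)|_{U_d}$ is trivial and $\deg(\Delta) \cdot h = 0$ in $H^2(U_d;\QQ)$; since $\deg(\Delta) > 0$ and coefficients are rational, this forces $h|_{U_d} = 0$. Pulling back yields $c_1(L) = 0 \in H^2(U_d^*;\QQ)$.

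With $c_1(L) = 0$, the $d_2$ differential vanishes identically and the spectral sequence collapses at $E_2$. To recover the algebra structure, I choose a lift $\tilde{e}_1 \in H^1(X_d^*;\QQ)$ of the fiber generator $e_1 \in H^1(\CC^*;\QQ)$ (which exists precisely because the transgression vanishes) and invoke Leray--Hirsch to present $H^*(X_d^*;\QQ)$ as a free $H^*(U_d^*;\QQ)$-module on basis $\{1, \tilde{e}_1\}$. Graded commutativity over $\QQ$ automatically forces $\tilde{e}_1^2 = 0$, so the module decomposition upgrades to the desired isomorphism of graded $\QQ$-algebras. The principal obstacle in this plan is the vanishing of $c_1(L)$; once that is in hand, everything else is formal. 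The degree restriction $k < (d-1)/2$ plays no essential role in this argument--the identification holds in all degrees--but it is inherited from the stable-range context of the surrounding paper.
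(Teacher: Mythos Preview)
The paper does not supply its own proof of this proposition; it simply records it as ``a special case of Lemma 2.6 in \cite{D}.'' Your argument is correct and self-contained: the principal $\CC^*$-bundle $X_d^* \to U_d^*$ is indeed pulled back from the tautological bundle over $\PP(V_d)$ via $U_d^* \to U_d \hookrightarrow \PP(V_d)$, and the hyperplane class dies rationally on the complement of any positive-degree hypersurface (here the discriminant), so the Euler class vanishes and the Serre spectral sequence collapses, after which Leray--Hirsch and graded commutativity give the algebra statement. This is almost certainly the same mechanism underlying the cited lemma of Das, so your approach should be regarded as essentially equivalent to the paper's source, only made explicit rather than quoted. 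You are also right that the hypothesis $k < \tfrac{d-1}{2}$ plays no role here; the splitting holds in every degree.
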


Proposition \ref{tens} implies if there are no  nonzero differentials in both our Vassiliev spectral sequence and in the Serre spectral sequence associated to the fibration $X_{d,n}^p \to \CC^n - 0$ then
$$H^*(U_{d,p}; \QQ) \cong H^*(G_p; \QQ) \otimes \QQ[e_{2n-1}] / (e_{2n-1}^2)$$ for $* < \frac{d-1}{2}$. In case there are nonzero differentials in either spectral sequence, then $H^*(U_{d,p}; \QQ) \cong H^*(G_p; \QQ)$ for $*< \frac{d-1}{2}$.

\section{Comparing fibre bundles}
In this section we finish the proof of Theorem \ref{main}.

\begin{proof}[Proof of Theorem \ref{main}]
We compare three related fibre bundles and their associated spectral sequences. This is similar to the Proof of Theorem 1.1 in \cite{D}.
\begin{equation}\label{fibre}
    \xymatrix{
PG_p := Stab_{PGL(n+1)} p  \ar[rd] \ar[r] & U_{d,p} \ar[rd]  \ar[r] & U_d \ar[rd] & \\
& PGL_{n+1}(\CC)  \ar[r] \ar[d] & U_{d}^*  \ar[r]\ar[d] & U_d\times \PP^n \ar[d]\\
& \PP^n \ar[r]^= & \PP^n \ar[r]^= &\PP^n\\
} \end{equation}

By Proposition \ref{ifnodiff} and Theorem 1 of \cite{PS} there are two possibilities for $H^*(U_{d,p}; \QQ)$: either $$H^*(U_{d,p}; \QQ) \cong H^*(PG_p; \QQ) \otimes \QQ[e_{2n-1}] / (e_{2n-1}^2) \cong \Lambda <u_1, u_3,\dots u_{2n-1}, e_{2n-1}>$$(exterior algebra) or $$H^*(U_{d,p}; \QQ) \cong H^*(PG_p; \QQ) = \Lambda <u_1, u_3, \dots u_{2n-1}>.$$

Suppose for the sake of contradiction that $H^*(U_{d,p}) = \Lambda <u_3, \dots u_{2n-1}>$ for $* < \frac{d-1}{2}$. In this case $H^*(U_{d,p}; \QQ) \cong  H^*(PG_p; \QQ)$ for $* < \frac{d-1}{2}$.
Then  since the homology of the base and the fibres are isomorphic, $H^*(U_d^*;\QQ) \cong H^*(PGL_{n+1}(\CC);\QQ)$ for $* < \frac{d-1}{2}$. However by Proposition \ref{atleast}, $$H^*(PGL_{n+1}(\CC); \QQ) \otimes \QQ[x]/x^n)\subseteq H^*(U_d^*;\QQ).$$  But $H^*(PGL_{n+1}(\CC); \QQ)$ does not contain a subalgebra isomorphic to $H^*(PGL_{n+1}(\CC);\QQ) \otimes \QQ[x]/x^n)$. This is a contradiction.

So we must be in the case where,  $$H^*(U_{d,p};\QQ) \cong H^*(PG_p;\QQ) \otimes \QQ[e_{2n-1}] / (e_{2n-1}^2).$$ 

Consider the Serre spectral sequence associated to the fibration $U_d^* \to \PP^n$. Its $E_2$ page has terms $$E_2^{p,q}=H^p(\PP^n, H^q(U_d^p; \QQ)) \cong H^p(\PP^n; \QQ) \otimes H^q(U_d^p; \QQ).$$
Now $$H^q(U_d^p; \QQ) \cong H^q(PG_p;\QQ) \otimes \QQ[e_{2n-1}]/(e_{2n-1}^2) .$$
Consider the trivial fibre bundle $U_d \times \PP^n \to \PP^n$. There is a natural inclusion of fibre bundles as shown in \eqref{fibre}. This induces a map of spectral sequences between the associated Serre spectral sequences. 

Note that any class $\alpha \in H^q(U_d^p; \QQ)$ that lies in the image of $H^q(U_d;\QQ)$ is mapped to zero under any differential thanks to the fact that all dfferentials are zero in the spectral sequence associated to a trivial fibration.
The only possible nonzero differential in the $E_2$ page of the Serre spectral sequence associated to the fibration $U_d^* \to \PP^n$ is $d(e_{2n-1})$.

Suppose for contradiction that $d (e_{2n-1}) =0$. This implies that $$H^k(U_d^*;\QQ) \cong (H^*(U_{d,p};\QQ) \otimes H^*(\PP^n;\QQ))_k = (H^*(PG_p ; \QQ) \otimes H^*(\PP^n, \QQ))_k $$ for $k <\frac{d-1}{2}$.

Let $p(t)$ be the Poincare polynomial of $U_d^*$. We already know that $H^*(U_d^*; \QQ) \cong H^*(PGL_{n+1}(\CC) ; \QQ) \otimes H^*(U_d^* /\mathrm{PGL}_ {n+1}(\CC); \QQ)$. So $ (1+t^3) \dots (1+t^{2n+1})| p(t)$. On the other hand, if  $d e_{2n-1} = 0$ then 
$$p(t) = (1+t^3) \dots (1+t^{2n-1})(1 +t^2 +t^4 \dots t^{2n} ) \mod t^{\frac{d-1}{2}}$$. If $d \ge 4n +1$, then this implies that $(1+t^{2n+1}) \not | p(t)$. This is a contradiction.

So we must have a differential killing the class in $H^{2n}(\PP^n, H^0(U_{d,p})); \QQ)$. The differential must come from from $e_{2n-1}$, i.e. $d (e_{2n-1}) = ax^n$ for some $a \in \QQ^*$. This (along with multiplicativity of differentials) determines all differentials and implies (1).
 By Proposition \ref{tens} (1) $\implies$ (2). By Theorem 1 of \cite{PS} $$H^*(X_{d,n}^*; \QQ) \cong H^*(M_{d,n}^*; \QQ )\otimes (H^*(GL_{n+1})(\CC);\QQ)$$. In light of this (2) $\implies$ (3).

\end{proof}
 Having finished the proof of Theorem \ref{main} we can prove Corollary \ref{twisted}.
 \begin{proof}[Proof of  Corollary \ref{twisted}]
 Consider the fibration:

   $$  \xymatrix{
Z(f) \ar[r] & X_d^* \ar[d]\\
& X_d\\
}
$$
  and its associated Serre spectral sequence whose $E_2$ page is of the form $$H^p(X_d; H^q(Z(f) ;\QQ)) \implies H^*(X_d^*;\QQ).$$ ByTheorem \ref{T} for $* < \frac{d+1}{2}$
  $$
  H^*(X_d; \QQ) \cong H^*(GL_{n+1}(\CC); \QQ).
  $$
  By Theorem \ref{main}, we know that the classes in the $E_2$ page corresponding to the group
  $H^p(GL_{n+1}(\CC); c_1(\LL)^q)$ survive till the $E^{\infty}$ page and in the stable range all other terms are killed by differentials. 
  
  Now suppose $n$ is even. Then the only other terms in the spectral sequence are of the form $H^p(X_d; H^{n-1}(Z(f);\QQ))$. However it is not possible for any such term to be in the image or in the preimage of a nonzero differential. This is because all other terms survive so any possible nonzero differential must be from $H^{p_1}(X_d; H^{n-1}(Z(f);\QQ))$ to  $H^{p_2}(X_d; H^{n-1}(Z(f);\QQ))$ for some choice of $p_1$ and $p_2$. However no differential is of bidegree $(p_2-p_1,0)$. This implies that $$H^p(X_d; H^{n-1}(Z(f);\QQ)) \cong 0.$$

  A similar argument shows that if $n$ is odd, $H^p(X_d; H^{n-1}(Z(f);\QQ)) \cong H^p(X_d; \QQ)$. Essentially the only difference between the even case and the odd case is that in the odd case we have a class $c_1(\LL)^{\frac{n-1}{2}} \in H^{n-1}(Z(f); \QQ)$. By Theorem \ref{main}, we know that $H^p(X_{d,n};\QQ c_1(\LL)^{\frac{n-1}{2}})$ survives till the $E^{\infty}$ page. An argument similar to that in the even case shows that $$H^p(X_d; H^{n-1}(Z(f);\QQ)) \cong H^p(X_d; \QQ c_1(\LL)^{\frac{n-1}{2}}).$$
  
 \end{proof}

\end{document}